\makeatletter\@ifundefined{pdfpagewidth}{}{\pdfpagewidth=21.0cm\pdfpageheight=29.7cm}\makeatother 
\let\orig@item=\@item \def\@item[#1]{\orig@item[\rm #1]}
\renewenvironment{abstract}{\begin{quote}\footnotesize\textbf{\abstractname.}}{\end{quote}\bigskip}
\renewcommand\@seccntformat[1]{\csname the#1\endcsname.\enspace}
\renewcommand\paragraph{\@startsection{paragraph}{4}{\z@}{1\baselineskip}{-0.5em}{\normalsize\bfseries}}
\let\origcaption=\caption \renewcommand\caption[1]{\parbox{0.8\textwidth}{\origcaption{#1}}}
\renewcommand\@begintheorem[2]{\trivlist\item[\hskip\labelsep{\bfseries#1 #2.}]\it}
\renewcommand\@opargbegintheorem[3]{\trivlist\item[\hskip\labelsep{\bfseries#1 #2}] {\bfseries(#3).}\enspace\it\ignorespaces}
\newtheorem{satz}{Satz}[section]
\newcommand\numberwithin{\@addtoreset} \numberwithin{equation}{satz} \makeatother
\newtheorem{theorem}[satz]{Theorem}
\newtheorem{lemma}[satz]{Lemma}
\newtheorem{proposition}[satz]{Proposition}
\newtheorem{remark}[satz]{Remark}
\newtheorem{introtheorem}{Theorem}
\newenvironment{proof}[1][Proof]{\trivlist\item[\hskip\labelsep{\it #1.}]}{\hspace*{\fill}$\Box$\endtrivlist}
\renewcommand\emptyset{\varnothing}  
\renewcommand\ge{\geqslant}  
\renewcommand\le{\leqslant}  
\renewcommand\geq{\geqslant}  
\renewcommand\epsilon{\varepsilon}
\renewcommand\phi{\varphi}
\renewcommand\bar{\overline}
\renewcommand\O{{\cal O}}
\renewcommand\P{\mathbb P}
\newcommand\R{\mathbb R}
\newcommand\C{\mathbb C}
\newcommand\Q{\mathbb Q}
\newcommand\Z{\mathbb Z}
\newcommand\isomto{\mathop{\longrightarrow}\limits^\sim}
\newcommand\bilin[2]{\left\langle#1,#2\right\rangle}
\newcommand\vecspan[1]{\left\langle#1\right\rangle}
\newcommand\longunion[3]{#1_{#2}\cup\dots\cup#1_{#3}}
\newcommand\tab[2][t]{\begin{tabular}[#1]{@{}l@{}}#2\end{tabular}}
\newcommand\liste[3]{\mbox{$#1_{#2},\dots,#1_{#3}$}}
\newcommand\set[1]{\left\{#1\right\}}
\newcommand\sset[1]{\left\{\,#1\,\right\}}
\newcommand\with{\ \vrule\ }
\newcommand\be{\begingroup\arraycolsep=0.13888em\begin{eqnarray*}}
\newcommand\ee{\end{eqnarray*}\endgroup}
\newcommand\eqnref[1]{(\ref{#1})}
\newcommand\eqdef{\stackrel{\rm def}{=}}
\newcommand\tensor{\otimes}
\newlength\matrcolsep \matrcolsep=\arraycolsep
\newcommand\matr[2][*{\maxmatrcols}{c}]{{\arraycolsep=\matrcolsep\left(\begin{array}{#1}#2\end{array}\right)}}
\newcommand\inverse{^{-1}}
\newcommand\interior[1]{\stackrel{\circ}{#1}}
\newcommand\compact{\itemsep=0cm \parskip=0cm}
\newcommand\newop[2]{\newcommand#1{\mathop{\rm #2}\nolimits}}
\newop\mult{mult} 
\newop\NS{NS} 
\newop\Neg{Neg}
\newop\BigCone{Big} 
\newop\vol{vol}
\newop\Nef{Nef}
\newop\Null{Null}
\newop\Bl{Bl}
\def\dyndiag#1 #2 #3{\unitlength1cm\begin{picture}(#1,#2)(0,-0.1)#3\end{picture}}
\newcommand{\circlediam}{0.15}
\def\punkt #1 #2 {\put(#1,#2){\circle*{\circlediam}}}
\def\pu #1{\punkt {#1} 0 }
\def\linie #1 #2 #3 #4 {\put(#1,#2){\line(#3,#4){1}}}
\def\li #1 {\linie {#1} 0 1 0 }
\def\dotli #1{\multiput(#1,0)(0.1,0){10}{\circle*{0.05}}}
\def\twodotli #1{\multiput(#1,0)(0.1,0){20}{\circle*{0.05}}}
\def\ddotli #1 #2 {\multiput(#1,#2)(0.07,0.07){14}{\circle*{0.05}}}
\def\idotli #1 #2 {\multiput(#1,#2)(0.07,-0.07){14}{\circle*{0.05}}}
\def\numb #1 #2 #3 {\put(#1,#2){\raisebox{0.15cm}{\makebox[0cm]{\footnotesize #3}}}}
\def\unumb #1 #2 #3 {\put(#1,#2){\raisebox{-0.4cm}{\makebox[0cm]{\footnotesize #3}}}}
\def\lnumb #1 #2 #3 {\put(#1,#2){\raisebox{-0.1cm}{\makebox[0cm][r]{\footnotesize #3\ }}}}
\def\rnumb #1 #2 #3 {\put(#1,#2){\raisebox{-0.1cm}{\makebox[0cm][l]{\ \footnotesize #3}}}}
\newcommand\ADEdiagrams{
   $A_n$
   \dyndiag 7 1 {%
      \pu1
      \pu2
      \pu3
      \pu4
      \pu6
      \li1
      \li2
      \li3
      \twodotli4
      }
   ($n\ge 1$ vertices)

   $D_n$
   \dyndiag 7 1 {%
      \punkt 1 0.5
      \punkt 1 -0.5
      \pu2
      \pu3
      \pu4
      \pu6
      \linie 1 0.5 2 -1
      \linie 1 -0.5 2 1
      \li2
      \li3
      \twodotli4
      }
   ($n\ge 4$ vertices)

   $E_n$
   \dyndiag 7 1 {%
      \pu1
      \pu2
      \pu3
      \pu4
      \pu5
      \pu6
      \punkt 3 -1
      \li1
      \li2
      \li3
      \li4
      \linie 3 0 0 -1
      \dotli5
      }
   ($n=6$, $7$ or $8$ vertices)
   }
\newcommand\FigureQuarticChambers{
   \unitlength 1mm 
   \linethickness{0.4pt}
   \ifx\plotpoint\undefined\newsavebox{\plotpoint}\fi 
   \begin{picture}(133,51)(0,0)
   \put(59,4){\line(-3,5){27}}
   \put(5,4){\line(1,0){54}}
   \put(22,32){\line(1,0){20}}
   \put(42,32){\line(-5,-6){10}}
   \put(32,20){\line(-5,6){10}}
   \put(76,4){\line(3,5){27}}
   \put(130,4){\line(-3,5){27}}
   \put(76,4){\line(1,0){54}}
   \put(93,32){\line(1,0){20}}
   \put(113,32){\line(-5,-6){10}}
   \put(103,20){\line(-5,6){10}}
   \put(32,20){\line(-5,-6){13.333}}
   \put(32,20){\line(5,-6){13.333}}
   \put(3,2){\makebox(0,0)[cc]{$L_1$}}
   \put(62,2){\makebox(0,0)[cc]{$L_2$}}
   \put(74,2){\makebox(0,0)[cc]{$L_1$}}
   \put(133,2){\makebox(0,0)[cc]{$L_2$}}
   \put(32,51){\makebox(0,0)[cc]{$C$}}
   \put(103,51){\makebox(0,0)[cc]{$C$}}
   \multiput(103,20)(-.0568421053,-.0336842105){475}{\line(-1,0){.0568421053}}
   \multiput(103,20)(.0568421053,-.0336842105){475}{\line(1,0){.0568421053}}
   \put(5,4){\line(3,5){27}}
   \put(32,37){\makebox(0,0)[cc]{$C$}}
   \put(103,37){\makebox(0,0)[cc]{$C$}}
   \put(20,18){\makebox(0,0)[cc]{$L_1$}}
   \put(91,18){\makebox(0,0)[cc]{$L_1$}}
   \put(43,18){\makebox(0,0)[cc]{$L_2$}}
   \put(114,18){\makebox(0,0)[cc]{$L_2$}}
   \put(32,9){\makebox(0,0)[cc]{$L_1$,\ $L_2$}}
   \put(103,9){\makebox(0,0)[cc]{$L_1+L_2$}}
   \end{picture}
   }
\begin{document}

\title{Weyl and Zariski chambers on K3 surfaces}
\author{Thomas Bauer and Michael Funke}
\date{June 2, 2010}
\maketitle
\thispagestyle{empty}

\begin{abstract}
   The big cone of every K3 surface admits two natural chamber
   decompositions: the decomposition into Zariski chambers, and
   the decomposition into simple Weyl chambers. In the present
   paper we compare these two decompositions and we study their
   mutual relationship: First, we give a numerical criterion for
   the two decompositions to coincide. Secondly, we study the
   mutual inclusions of Zariski and simple Weyl chambers.
   Finally, we establish the fact that -- even though the
   decompositions themselves may differ -- the number of Zariski
   chambers always equals the number of simple Weyl chambers.
\end{abstract}

\section*{Introduction}

   The purpose of this note is to study two natural chamber
   decompositions of the big cone on K3 surfaces: On the one hand
   one has the decomposition into Zariski chambers, which is
   important when considering base loci and volumes of line
   bundles (see \cite{BKS}), and on the other hand one has the
   well-known decomposition into simple Weyl chambers, which is
   given by the hyperplanes that are dual to the negative curves.
   We compare these two decompositions and we study their
   geometry -- by determining the mutual inclusions of chambers
   and by asking for the number of chambers into which the big
   cone is divided.

   For a more detailed description, consider a smooth projective
   variety $X$ of dimension $n$
   and a divisor $D$ on $X$. One defines the
   \textit{volume} of $D$ as the real number
   $$
      {\rm vol}_X(D)\stackrel{\rm def}{=}\limsup_k{\frac{h^0(X,kD)}{k^n/n!}}\ , \
   $$
   and one calls $D$ \textit{big} if its volume is positive. The
   big cone $\BigCone(X)$ is then the cone in the N\'eron-Severi
   vector space $\NS(X)\tensor\R$ that is generated by the big
   divisors. These have recently attracted a lot of attention, as
   it turned out that many geometric, cohomological, and
   numerical aspects of the picture that one has classically for
   ample line bundles extend to big divisors (see
   \cite{nak1,PAG,elmnp1,elmnp}). The main result of \cite{BKS}
   states that on surfaces the big cone admits a locally finite decomposition
   into rational locally polyhedral subcones such that
   \begin{itemize}\compact
   \item
      on each subcone the volume
      function is given by a single polynomial of degree two, and
   \item
      in the interior of each of the subcones the stable base
      loci are constant.
   \end{itemize}
   Both facts are explained by the variation of the Zariski
   decomposition of big divisors: In the interior of the subcones
   the support of the negative part of the Zariski decomposition
   is constant. These subcones have therefore been called
   \emph{Zariski chambers} in~\cite{BKS}.

   Suppose now that $X$ is a K3 surface. In that case,
   an interesting second point of view is provided by the
   \textit{Weyl chamber decomposition}. We consider here in
   particular \textit{simple} Weyl chambers,
   which are defined by the intersection
   behaviour of big divisors with $(-2)$-curves
   (see Sect.~\ref{sect-chambers} for details).

   Given that there are two natural decompositions of the big
   cone of a K3 surface, it is an obvious task to compare them.
   In \cite[Sect.~3.2]{BKS} the erroneous claim was made that the
   two decompositions always coincide. They may however differ,
   depending on the geometry of the surface, and our first result
   gives the precise condition when this is the case:

\begin{introtheorem}\label{introthm-equality-criterion}
   Let $X$ be a K3 surface. Then the following conditions are
   equivalent.
   \begin{itemize}\compact
   \item[(i)]
      The interiors of the Zariski chambers coincide with the
      simple Weyl chambers.
   \item[(ii)]
      There is no pair of
      $(-2)$-curves $C_1,C_2\subset X$ such that
      $$
         C_1\cdot C_2= 1 \,.
      $$
   \end{itemize}
   There exist K3 surfaces such that (i) and (ii) hold, and
   there exist K3 surfaces where these conditions do not hold.
\end{introtheorem}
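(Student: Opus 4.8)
The plan is to translate both decompositions into combinatorial data attached to a big divisor $D$ and then to compare them wall by wall. A simple Weyl chamber is recorded by the sign vector $(\mathop{\rm sgn}(D\cdot C))_C$ as $C$ ranges over the $(-2)$-curves, whereas the interior of a Zariski chamber is recorded by the support $\Neg(D)$ of the negative part of the Zariski decomposition $D=P_D+N_D$. First I would collect three elementary facts. (a) If $D\cdot C<0$ for a $(-2)$-curve $C$, then $C\in\Neg(D)$: otherwise $D\cdot C=P_D\cdot C+N_D\cdot C\ge 0$, since $P_D$ is nef and $C$ is not a component of the effective divisor $N_D$. (b) If $C$ is a $(-2)$-curve with $C\notin\Neg(D)$, then $P_D\cdot C>0$ and hence $D\cdot C>0$, because $\Neg(D)$ consists precisely of the negative curves orthogonal to $P_D$. (c) Since the intersection matrix of $\Neg(D)$ is negative definite, the curves in $\Neg(D)$ form an ADE configuration, so any two distinct ones meet in $0$ or $1$ points.

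For the implication (ii)$\Rightarrow$(i) I would argue as follows. Condition (ii) forbids intersection number $1$, so by (c) the curves in any $\Neg(D)$ are pairwise disjoint; consequently, for $C\in\Neg(D)$ one has $D\cdot C=N_D\cdot C=-2a_C<0$, where $a_C>0$ is the coefficient of $C$ in $N_D$. Together with (a) and (b) this shows that, for every big $D$, the number $D\cdot C$ is negative exactly when $C\in\Neg(D)$ and positive for all other $(-2)$-curves, with no zeros occurring. Hence the sign vector and the support $\Neg(D)$ determine each other, which yields both inclusions (each simple Weyl chamber lies in one Zariski chamber interior, and conversely) and therefore the equality of the two decompositions.

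For the contrapositive $\neg$(ii)$\Rightarrow\neg$(i), suppose $C_1\cdot C_2=1$. The heart of the matter is to realise $\{C_1,C_2\}$ as a negative-part support, that is, to produce a nef and big class $P$ with $P\cdot C_1=P\cdot C_2=0$ and $P\cdot C>0$ for every other $(-2)$-curve $C$; granting this, the associated Zariski chamber has interior consisting of the classes $P'+a_1C_1+a_2C_2$ with $P'$ ranging over the relevant nef face and $a_1,a_2>0$. On this interior $D\cdot C_1=-2a_1+a_2$ takes both signs (take $a_2>2a_1$ or $a_2<2a_1$, with $a_1,a_2$ small enough to keep $D$ big), so the Weyl wall $C_1^\perp$ passes through the interior of a single Zariski chamber and (i) fails. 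Finally, for the two existence statements I would take a K3 surface of Picard number $1$ (where no such pair exists, so (i) and (ii) hold vacuously), and a smooth quartic in $\P^3$ containing two lines meeting in a point (two $(-2)$-curves with $C_1\cdot C_2=1$, so (ii), and hence by the theorem (i), fails).

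I expect the main obstacle to be the existence of the nef and big class $P$ orthogonal to $C_1,C_2$ but strictly positive on all other $(-2)$-curves. I plan to obtain it from the Hodge index theorem — the orthogonal complement of the negative-definite rank-$2$ lattice $\langle C_1,C_2\rangle$ in $\NS(X)$ has signature $(1,\rho-3)$ and hence contains classes of positive self-intersection — and then to move such a class into the interior of the face $\Nef(X)\cap C_1^\perp\cap C_2^\perp$ by reflections in the Weyl group, checking that this face is of the expected dimension and that the required strict positivity on the remaining $(-2)$-curves can be arranged.
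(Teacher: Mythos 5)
Your overall strategy is the same as the paper's: record a simple Weyl chamber by the signs of $D\cdot C$ and a Zariski chamber by $\Neg(D)$, observe that negative definiteness together with condition (ii) forces the curves in $\Neg(D)$ to be pairwise disjoint, and, for the converse, push the wall $C_1^\perp$ through the interior of a single Zariski chamber; even your examples (Picard number one, quartics with two meeting lines) are the paper's. However, your fact (b) is false as stated, and this is a genuine gap. It is not true for an arbitrary big divisor $D$ that $\Neg(D)$ consists precisely of the negative curves orthogonal to $P_D$: one only has $\Neg(D)\subset\Null(P_D)$, and the inclusion is strict exactly for divisors lying on boundaries of Zariski chambers. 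For instance, if $C$ is a $(-2)$-curve, $H$ is ample and $t=\frac12 H\cdot C$, then $D=H+tC$ is big and nef, so $\Neg(D)=\emptyset$, while $D\cdot C=P_D\cdot C=0$; this contradicts (b) and also your conclusion that for \emph{every} big $D$ no zeros occur in the sign vector. The equality $\Neg(D)=\Null(P_D)$ characterizes divisors off the chamber boundaries --- this is \cite[Prop.~1.7]{BKS}, which the paper invokes --- and statement (i) speaks of \emph{interiors} of Zariski chambers precisely for this reason. To repair the argument you must restrict to such divisors and cite (or prove) that characterization; moreover, the inclusion of a Weyl chamber into a Zariski chamber \emph{interior} needs one further step, which under (ii) can be done as follows: a curve in $\Null(P_D)\setminus\Neg(D)$ would be disjoint from all curves of $\Neg(D)$, hence would satisfy $D\cdot C=0$, which is impossible inside a Weyl chamber.

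The second gap is the step you yourself flag as the main obstacle: the existence of a big and nef class $P$ with $\Null(P)=\set{C_1,C_2}$. Your plan would not work as described, because a Weyl group element $w$ moving the positive class $x$ into the nef cone moves the orthogonality constraint along with it: $w(x)$ is orthogonal to $w(C_1)$ and $w(C_2)$, not to $C_1$ and $C_2$, so the resulting nef class need not vanish on the original pair of curves; the assertions about the dimension of the face $\Nef(X)\cap C_1^\perp\cap C_2^\perp$ and about strict positivity on the remaining $(-2)$-curves are likewise left unproved. There is an elementary construction, and it is the one the paper uses (cf.\ Lemma~\ref{lemma-neg-def} and Lemma~\ref{lemma-inverse-matrix}): take $H$ ample and set $P=H+x_1C_1+x_2C_2$, where $(x_1,x_2)$ solves the two linear equations $P\cdot C_1=P\cdot C_2=0$. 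The coefficient matrix of this system is negative definite with non-negative off-diagonal entries, so by Lemma~\ref{lemma-inverse-matrix} the $x_i$ are non-negative; consequently $P$ is nef, big, and $P\cdot C\ge H\cdot C>0$ for every irreducible curve $C\ne C_1,C_2$. With this $P$, your wall-crossing computation $D\cdot C_1=-2a_1+a_2$ for $D=P+a_1C_1+a_2C_2$ goes through (using \cite[Prop.~1.8]{BKS} to place these divisors in $\interior Z_{\set{C_1,C_2}}$), and that half of your argument then becomes essentially the paper's proof, which constructs the same data by solving the analogous system starting from $H+a_1C_1+a_2C_2$.
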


   This theorem is a consequence of the following more general
   result about the geometry of Zariski and Weyl chambers. To
   state it, we will use the following notation: When $\mathcal
   S=\set{\liste C1r}$ is a set of $(-2)$-curves whose
   intersection matrix is negative definite, then $Z_{\mathcal
   S}$ will denote the Zariski chamber consisting of the big divisors
   whose negative part is supported on $\longunion C1r$,
   and $W_{\mathcal S}$ will
   denote the simple Weyl chamber consisting of the divisors that
   intersect $\liste C1r$ negatively and all other $(-2)$-curves
   positively (see Sect.~\ref{sect-chambers} for the details).
   Furthermore, we will write $\interior Z_S$ to denote the
   interior of the chamber $Z_S$.

\begin{introtheorem}\label{introthm-inclusions}
   Let $X$ be a K3 surface, and let $\mathcal S$ be a set of
   $(-2)$-curves on $X$ whose intersection matrix is negative
   definite.
   \begin{itemize}\compact
   \item[(i)]
      We have $W_{\mathcal S}\subset Z_{\mathcal S}$ if and only
      if the following condition holds: If $C'$ is a curve with
      $C'\notin\mathcal S$ such that the intersection matrix of
      the set $\mathcal S\cup\set{C'}$ is
      negative definite, then $C'\cdot C=0$ for all $C\in\mathcal
      S$.
   \item[(ii)]
      We have $\interior Z_{\mathcal S}\subset W_{\mathcal S}$ if and only
      if $C_1\cdot C_2\ne 1$ for all curves $C_1,C_2\subset S$.
   \end{itemize}
\end{introtheorem}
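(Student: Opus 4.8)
The plan rests on the Zariski decomposition together with one piece of geometric input. I would first fix notation: for a big divisor $D$ write $D=P_D+N_D$, so $P_D$ is nef, $N_D=\sum_C b_C\,C$ is effective with negative definite support, $P_D\cdot C=0$ for every $C$ in that support, and hence $D\cdot C=N_D\cdot C$ there. Two facts will drive everything. First, if a $(-2)$-curve $C$ has $D\cdot C<0$, then $C$ lies in $\mathrm{Supp}(N_D)$: otherwise $P_D\cdot C\ge 0$ and $N_D\cdot C\ge 0$ would force $D\cdot C\ge 0$. In particular $\mathcal S\subseteq\mathrm{Supp}(N_D)$ for every $D\in W_{\mathcal S}$. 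Secondly, every negative definite configuration $\mathcal S$ of $(-2)$-curves is the exact null locus of some nef and big class: contracting $\mathcal S$ yields a birational morphism $\pi\colon X\to Y$ onto a surface with du~Val singularities, and for $A$ ample on $Y$ the class $P=\pi^*A$ is nef and big with $P\cdot C=0$ for a $(-2)$-curve $C$ precisely when $C\in\mathcal S$. Consequently, for any prescribed $b_C>0$ the divisor $D=P+\sum_{C\in\mathcal S}b_C\,C$ lies in $\interior Z_{\mathcal S}$ with $N_D=\sum b_C\,C$; recall that $\interior Z_{\mathcal S}$ consists of those $D$ with $\mathrm{Supp}(N_D)=\mathcal S$ and $P_D\cdot C>0$ for all $(-2)$-curves $C\notin\mathcal S$.

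I would prove (ii) first. For $D\in\interior Z_{\mathcal S}$ and a $(-2)$-curve $C\notin\mathcal S$ one has $D\cdot C=P_D\cdot C+N_D\cdot C>0$, since $P_D\cdot C>0$ and $N_D\cdot C\ge 0$, so the positivity on the remaining curves needed for $W_{\mathcal S}$ is automatic. For $C_i\in\mathcal S$ one computes $D\cdot C_i=N_D\cdot C_i=-2b_{C_i}+\sum_{j\ne i}b_{C_j}(C_i\cdot C_j)$. If no two curves of $\mathcal S$ meet, this is $-2b_{C_i}<0$, so $D\in W_{\mathcal S}$ and the inclusion holds. If instead $C_{i_0}\cdot C_{j_0}=1$, I would invoke the realization statement to take $D\in\interior Z_{\mathcal S}$ with $b_{C_{i_0}}$ small and $b_{C_{j_0}}$ large; then $D\cdot C_{i_0}\ge-2b_{C_{i_0}}+b_{C_{j_0}}>0$, so $D\notin W_{\mathcal S}$ and the inclusion fails. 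This settles (ii).

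For the ``if'' direction of (i) I assume the stated condition and take $D\in W_{\mathcal S}$. Put $\mathcal T=\mathrm{Supp}(N_D)\supseteq\mathcal S$ and $\mathcal S'=\mathcal T\setminus\mathcal S$. Each $C'\in\mathcal S'$ has $C'\notin\mathcal S$ with $\mathcal S\cup\{C'\}\subseteq\mathcal T$ negative definite, so by hypothesis $C'\cdot C=0$ for all $C\in\mathcal S$; thus $\mathcal S'$ is orthogonal to $\mathcal S$ and $N_D$ splits as $N_D=N_{\mathcal S}+N_{\mathcal S'}$. For $C'\in\mathcal S'$ this gives $D\cdot C'=N_D\cdot C'=N_{\mathcal S'}\cdot C'>0$ because $C'\notin\mathcal S$. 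Hence, if $\mathcal S'\ne\emptyset$, we get $N_{\mathcal S'}^2=\sum_{C'\in\mathcal S'}b_{C'}(N_{\mathcal S'}\cdot C')>0$, contradicting the negative definiteness of $\mathcal S'$. Therefore $\mathcal S'=\emptyset$, i.e. $\mathrm{Supp}(N_D)=\mathcal S$ and $D\in Z_{\mathcal S}$.

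It remains to prove the ``only if'' direction of (i), and this construction is the step I expect to be the main obstacle. Arguing contrapositively, suppose there is a $(-2)$-curve $C'\notin\mathcal S$ with $\mathcal S^+:=\mathcal S\cup\{C'\}$ negative definite and $C'\cdot C_0=1$ for some $C_0\in\mathcal S$. I would produce $D\in W_{\mathcal S}$ whose negative part is supported on all of $\mathcal S^+$, so that $D\notin Z_{\mathcal S}$. Take a nef and big $P$ with null locus exactly $\mathcal S^+$ and set $D=P+\sum_{C\in\mathcal S^+}b_C\,C$, so that $D\cdot C=(M^+\vec b)_C$, where $M^+$ is the negative definite intersection matrix of $\mathcal S^+$ and $\vec b=(b_C)$. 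The delicate point is to choose $\vec b>0$ making this vector negative in the $\mathcal S$-coordinates and positive in the $C'$-coordinate: solving $M^+\vec b=\vec x$ for the target $x_{C'}=\epsilon>0$, $x_{C_i}=-1$ and using that $-M^+$ is a Stieltjes matrix (nonnegative inverse, positive on each connected component, each of which meets $\mathcal S$ since $C'$ is joined to $C_0$) yields $\vec b>0$ for small $\epsilon$. For any remaining $(-2)$-curve $C''\notin\mathcal S^+$ one has $D\cdot C''=P\cdot C''+N_D\cdot C''>0$. Thus $D\cdot C_i<0$ on $\mathcal S$ and $D\cdot C>0$ on every other $(-2)$-curve, so $D\in W_{\mathcal S}$, while $\mathrm{Supp}(N_D)=\mathcal S^+\not\subseteq\mathcal S$ shows $D\notin Z_{\mathcal S}$. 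Hence the condition is necessary, which completes the proof.
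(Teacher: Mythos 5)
Your proof is correct, but it takes a genuinely different route from the paper's, which establishes (i) and (ii) as Propositions~\ref{prop-weil-in-zariski} and~\ref{prop-zariski-in-weil} using only its own toolkit: the realization Lemma~\ref{lemma-neg-def} ($\Neg(H+k\sum C_i)=\mathcal S$ for $k\gg0$), the boundary/interior criteria of \cite{BKS} (Props.~1.7 and 1.8), and the inverse-matrix Lemma~\ref{lemma-inverse-matrix}. Concretely, the paper's counterexample for (ii) is obtained by perturbing a boundary divisor ($E=D^*+\epsilon C_1$), and its counterexample for (i) by solving all intersection inequalities to be negative, resetting $c'=\frac14\min c_i$, and appealing to a case-by-case A-D-E analysis whose details it omits. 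You instead build everything on one stronger realization fact: a nef and big $P$ with $\Null(P)=\mathcal S$ exactly (via the du Val contraction), so that $P+\sum b_CC$ is an interior point of $Z_{\mathcal S}$ with freely prescribed $b_C>0$. This makes both counterexamples direct: for (ii) take $b_{i_0}$ small and $b_{j_0}$ large; for (i) solve $M^+\vec b=(\epsilon,-1,\dots,-1)$ and use that irreducible Stieltjes blocks have strictly positive inverses, noting that every component of $\mathcal S\cup\set{C'}$ meets $\mathcal S$ --- a uniform linear-algebra argument that actually fills in the step the paper leaves as a sketch. Your ``if'' half of (i) is also slicker: where the paper passes to the connected component of $C'$ in $\Neg(D)$ and invokes Lemma~\ref{lemma-inverse-matrix}, you split $N_D=N_{\mathcal S}+N_{\mathcal S'}$ by orthogonality and contradict negative definiteness via $N_{\mathcal S'}^2>0$. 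The price is two inputs the paper never needs: (a) projectivity of the contraction (equivalently, semiampleness of nef and big classes on a K3); this could be avoided by extracting $\Null(P)=\mathcal S$ from Lemma~\ref{lemma-neg-def} itself, since the positive part there is $H+\sum x_jC_j$ with $x_j\ge0$ by Lemma~\ref{lemma-inverse-matrix}, hence meets every curve outside $\mathcal S$ positively; and (b) the strict-positivity refinement of Lemma~\ref{lemma-inverse-matrix} on irreducible blocks, which is standard M-matrix theory but deserves an explicit statement or reference.
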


   We will study in Sect.~\ref{sect-example} in detail examples
   of K3 surfaces where various cases of inclusions and
   equalities of chambers occur.

   Given the fact that the decompositions may well be different,
   it is nice and somewhat surprising that on any K3 surface
   there are precisely as many Zariski chambers as there are Weyl
   chambers:

\begin{introtheorem}
   Let $X$ be a K3 surface. Then there is a canonical bijection
   between the set of Zariski chambers in $\BigCone(X)$ and the
   set of simple Weyl chambers in $\BigCone(X)$.
\end{introtheorem}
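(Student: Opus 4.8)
The canonical bijection I would use is simply $Z_{\mathcal S}\mapsto W_{\mathcal S}$, matching the Zariski chamber and the simple Weyl chamber that carry the same index set $\mathcal S$. Every Zariski chamber is $Z_{\mathcal S}$ for a unique negative-definite set $\mathcal S$ of $(-2)$-curves, and every simple Weyl chamber is $W_{\mathcal S}$ for a unique such $\mathcal S$ (see Sect.~\ref{sect-chambers}); so the theorem reduces to showing that the two index sets coincide, i.e.\ that $Z_{\mathcal S}\neq\emptyset$ if and only if $W_{\mathcal S}\neq\emptyset$ for every negative-definite $\mathcal S$. I would prove both nonemptiness conditions to be equivalent to membership in the single family $\mathcal I$ of sets that arise as $\Null(P)$, the set of $(-2)$-curves $C$ with $P\cdot C=0$, for some big and nef class $P$.

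On the Zariski side this is essentially the content of \cite{BKS}, where the chambers are parametrised by the sets $\Null(P)$ with $P$ big and nef, so that $\set{\mathcal S : Z_{\mathcal S}\neq\emptyset}=\mathcal I$. The nontrivial inclusion can be checked by hand: given big and nef $P$ with $\Null(P)=\mathcal S=\set{\liste C1r}$, the class $D=P+\sum_i\epsilon_iC_i$ with small $\epsilon_i>0$ has nef part $P$ and negative part $\sum_i\epsilon_iC_i$ (since $P\cdot C_i=0$ and the intersection matrix of $\mathcal S$ is negative definite), whence $\Neg(D)=\mathcal S$.

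The same construction yields the easy half $\mathcal I\subseteq\set{\mathcal S : W_{\mathcal S}\neq\emptyset}$. Writing $M=(C_i\cdot C_j)$, the matrix $-M$ has diagonal entries $2$ and off-diagonal entries $-C_i\cdot C_j\le0$, hence is a symmetric positive-definite Stieltjes matrix with $(-M)\inverse\ge0$ entrywise. One may therefore pick $\epsilon_i>0$ with $(M\epsilon)_j<0$ for all $j$, and then the very same $D=P+\sum_i\epsilon_iC_i$ satisfies $D\cdot C_j=(M\epsilon)_j<0$ for $C_j\in\mathcal S$ and $D\cdot C\ge P\cdot C>0$ for every other $(-2)$-curve $C$ (the latter curve by curve, as $C_i\cdot C\ge0$); thus $D\in W_{\mathcal S}$.

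The heart of the proof is the reverse inclusion $\set{\mathcal S : W_{\mathcal S}\neq\emptyset}\subseteq\mathcal I$: from a big class $x$ with $x\cdot C_i<0$ on $\mathcal S$ and $x\cdot C>0$ on all other $(-2)$-curves I must produce a big and nef $P$ with $\Null(P)$ equal to $\mathcal S$ \emph{exactly}. I would pass to the face $\mathcal F=\overline{W_{\mathcal S}}\cap\bigcap_iC_i^\perp$ along which precisely the walls of $\mathcal S$ are active; every point of $\mathcal F$ is nef by continuity of the defining inequalities, and a generic point $P\in\mathcal F$ has $\Null(P)=\mathcal S$ unless some further $(-2)$-curve is orthogonal to all of $\mathcal F$. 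The latter is ruled out by the observation that no irreducible $(-2)$-curve lies in $\vecspan{\mathcal S}$: if $C'=\sum_ia_iC_i$ with $C'\notin\mathcal S$, then $C'\cdot C_j\ge0$ gives $Ma\ge0$, hence $a=M\inverse(Ma)\le0$, so $H\cdot C'=-\sum_i(-a_i)(H\cdot C_i)\le0$ for an ample $H$, contradicting $H\cdot C'>0$. The step I expect to be the genuine obstacle is to guarantee that $\mathcal F$ is full-dimensional in $\vecspan{\mathcal S}^\perp$, equivalently that $\Nef(X)\cap\vecspan{\mathcal S}^\perp$ contains a class of positive self-intersection; here I would use the local finiteness of the $(-2)$-curves in the positive cone together with a careful limiting argument, driving the intersections $x\cdot C_i$ simultaneously to zero inside $\overline{W_{\mathcal S}}$ and controlling which further walls are met, to see that each $C_i^\perp$ is a facet of $W_{\mathcal S}$ and that these facets, being cut out by the linearly independent $C_i$, meet in a common face of the expected codimension $|\mathcal S|$.
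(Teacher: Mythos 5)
Your bijection $Z_{\mathcal S}\mapsto W_{\mathcal S}$ and the reduction to showing that the same index sets occur on both sides is exactly the paper's strategy, and two of your three steps are sound: the identification of nonempty Zariski chambers with negative definite sets (equivalently, sets of the form $\Null(P)$) is Lemma~\ref{lemma-neg-def}, and your construction $D=P+\sum_i\epsilon_iC_i$ with $M\epsilon<0$, resting on the fact that $(-M)\inverse$ has non-negative entries, is the same argument the paper gives (with an ample $H$ in place of $P$; the matrix fact is Lemma~\ref{lemma-inverse-matrix}). The genuine gap is in what you yourself call the heart: the inclusion $\set{\mathcal S\with W_{\mathcal S}\neq\emptyset}\subseteq\mathcal I$. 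Your plan -- pass to the face $\mathcal F=\overline{W_{\mathcal S}}\cap\bigcap_iC_i^\perp$ and take a generic point -- hinges on $\mathcal F$ being full-dimensional in $\vecspan{\mathcal S}^\perp$, both to rule out a further $(-2)$-curve orthogonal to all of $\mathcal F$ (your span argument needs $(\vecspan{\mathcal F})^\perp=\vecspan{\mathcal S}$) and to find a point of positive self-intersection, hence a big nef $P$. You explicitly leave this step unproven, offering only a sketch of a ``careful limiting argument''; and this is precisely where the difficulty sits: the walls $C^\perp$ are only locally finite inside the big cone, while $\mathcal F$ lies on chamber boundaries, where infinitely many walls can a priori accumulate, so ``controlling which further walls are met'' as $x\cdot C_i\to0$ is not routine. (A smaller glitch: nefness of points of $\mathcal F$ is not just ``continuity'' -- it uses that on a K3 every irreducible curve of non-negative square is nef, so only the $(-2)$-walls matter.)

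The gap is also unnecessary, because the implication you are after has a two-line proof via Zariski decomposition, which is how the paper argues. If $x\in W_{\mathcal S}$ is big, write $x=P_x+N_x$; since $P_x$ is nef, $x\cdot C_i<0$ forces $N_x\cdot C_i<0$, so each $C_i$ is a component of $N_x$, i.e.\ $\mathcal S\subset\Neg(x)$. Hence $\mathcal S$ is finite with negative definite intersection matrix, and then the construction in Lemma~\ref{lemma-neg-def} (the nef part $P$ of $H+k\sum_iC_i$ for $k\gg0$ satisfies $\Null(P)=\mathcal S$ exactly, since $P\cdot C\ge H\cdot C>0$ for every curve $C\notin\mathcal S$) gives $\mathcal S\in\mathcal I$. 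Substituting this for your face construction closes the gap and makes your proof essentially identical to the paper's.
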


\section{Chamber decompositions}\label{sect-chambers}

\paragraph{Zariski chambers.}
   We recall very briefly the Zariski chamber decomposition from~\cite{BKS}.
   Let $X$ be a smooth projective surface over $\mathbb C$. To any big and nef
   $\R$-divisor $P$, one associates the \textit{Zariski
   chamber} $\Sigma_P$, which by definition
   consists of all divisors in
   $\BigCone(X)$ such
   that the irreducible curves
   in the negative part of
   the Zariski decomposition of $D$ are precisely the curves $C$
   with
   $P\cdot C=0$.
   One has by \cite[Lemma~1.6]{BKS}:

\begin{quote}\it
   For two big and nef divisors
   $P$ and $P'$, the Zariski
   chambers $\Sigma_P$ and $\Sigma_{P'}$ are
   either equal or disjoint.
   The Zariski chambers yield a decomposition of the
   big cone.
\end{quote}
   If $H$ is an ample divisor, then the interior of the
   chamber $\Sigma_H$ is the
   ample cone, its closure is the nef cone.
   (Note that $\Sigma_H$ itself need not be open or closed.)
   By way of abbreviation,
   we will refer to this chamber as the \emph{nef chamber} in the
   sequel.
   We will make frequent use of the following basic
   observation.

\begin{lemma}\label{lemma-neg-def}
   The set of Zariski chambers on a smooth projective surface
   $X$ that are different from the
   nef chamber is in bijective correspondence with the set of
   reduced divisors on $X$ whose intersection matrix is negative
   definite.
\end{lemma}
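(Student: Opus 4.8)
The plan is to define a map $\Phi$ from the Zariski chambers other than the nef chamber to the reduced divisors with negative-definite intersection matrix, to observe that $\Phi$ is well defined and injective essentially by the definition of the chambers, and then to prove surjectivity by an explicit construction, which is where the actual work lies. Throughout, write $\Neg(D)$ for the reduced support of the negative part of the Zariski decomposition of a big divisor $D$; by the Zariski decomposition theorem $\Neg(D)$ is a reduced divisor whose intersection matrix is negative definite.

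\emph{Well-definedness and injectivity.} Let $\Sigma_P$ be a Zariski chamber different from the nef chamber, with $P$ big and nef, and set $S(P)=\sset{C\with P\cdot C=0}$ (the irreducible curves orthogonal to $P$). As the Zariski chambers decompose the big cone, $\Sigma_P$ is nonempty, and for every $D\in\Sigma_P$ the definition of the chamber gives $\Neg(D)=S(P)$; hence $S(P)$ is a genuine reduced divisor with negative-definite intersection matrix, and it is independent of the chosen representative $P$ (if $\Sigma_P=\Sigma_{P'}$ then any $D$ in the common chamber yields $S(P)=\Neg(D)=S(P')$). Define $\Phi(\Sigma_P)=\sum_{C\in S(P)}C$. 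Since $\Sigma_P=\set{D\in\BigCone(X)\with\Neg(D)=S(P)}$, the chamber is recovered from its image divisor, so $\Phi$ is injective. The nef chamber is precisely the one with $S(P)=\emptyset$, which is why it is excluded on the left and the zero divisor excluded on the right.

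\emph{Surjectivity (the crux).} Given a reduced divisor $N=C_1+\dots+C_r$ with negative-definite intersection matrix $M=(C_i\cdot C_j)$, I must produce a chamber with $\Phi$-image $N$, i.e.\ a big nef $P$ with $S(P)=\set{C_1,\dots,C_r}$. Fix an ample class $A$, let $a\in\R^r$ solve $Ma=(A\cdot C_i)_i$, and put $P=A-\sum_i a_iC_i$, so that $P\cdot C_j=0$ for all $j$ by construction. The decisive point is the sign of $a$: because $M$ is negative definite while $C_i\cdot C_j\ge 0$ for $i\ne j$, the matrix $-M$ is a Stieltjes matrix and therefore $(-M)\inverse$ has nonnegative entries; since $A\cdot C_i>0$ this forces $a_i\le 0$, whence $P=A+\sum_i(-a_i)C_i$ is the sum of an ample class and an effective divisor. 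Consequently $P\cdot\Gamma=A\cdot\Gamma+\sum_i(-a_i)(C_i\cdot\Gamma)\ge 0$ for every irreducible curve $\Gamma$, with equality exactly when $\Gamma\in\set{C_1,\dots,C_r}$. Thus $P$ is nef, it is big, and $S(P)=\set{C_1,\dots,C_r}$ precisely.

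It then remains to confirm that $\Sigma_P$ is nonempty and realizes $N$. For small reals $t_i>0$ the divisor $D=P+\sum_i t_iC_i$ is big, and the expression $P+\sum_i t_iC_i$ satisfies all defining properties of a Zariski decomposition ($P$ nef, $P\cdot C_i=0$, and $\set{C_1,\dots,C_r}$ negative definite); by uniqueness of the Zariski decomposition it is the Zariski decomposition of $D$, so $\Neg(D)=\set{C_1,\dots,C_r}$ and $D\in\Sigma_P$. Hence $\Phi(\Sigma_P)=C_1+\dots+C_r=N$, which proves surjectivity and completes the bijection. I expect the main obstacle to be the construction of $P$ --- in particular the sign argument, via the Stieltjes ($M$-matrix) positivity, that makes $P$ nef; the remaining steps are bookkeeping around the definition of the chambers and the uniqueness of the Zariski decomposition.
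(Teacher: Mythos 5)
Your proof is correct, and its crux runs along a genuinely different line from the paper's. For surjectivity the paper considers $D=H+k(C_1+\dots+C_r)$ with $H$ ample and $k\gg 0$ and cites the computation of Zariski decompositions from \cite{Bau} to see that $\Neg(D)=\set{C_1,\dots,C_r}$; you instead build the big and nef divisor $P=A-\sum_i a_iC_i$ by solving the linear system $Ma=(A\cdot C_i)_i$, control the signs of the $a_i$ by the Stieltjes-matrix fact, and then exhibit a point of $\Sigma_P$ by adding small positive multiples of the $C_i$ and invoking the uniqueness characterization of the Zariski decomposition. Your sign fact is precisely Lemma~\ref{lemma-inverse-matrix} from the appendix of this paper (a negative definite matrix with nonnegative off-diagonal entries has all entries of its inverse $\le 0$), so your route stays within the paper's own toolkit while avoiding both the asymptotic $k\gg 0$ argument and the external reference to \cite{Bau}; it also has the merit of producing explicitly the nef divisor $P$ supporting the chamber, which is exactly what one needs in order to verify that the chamber is non-empty.

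One soft spot deserves mention: in your well-definedness paragraph you claim that $\Sigma_P\neq\emptyset$ ``as the Zariski chambers decompose the big cone''. The decomposition statement -- chambers are pairwise equal or disjoint and their union is $\BigCone(X)$ -- does not by itself exclude that $\Sigma_P=\emptyset$ for some big and nef $P$, and you need this non-emptiness to conclude that $S(P)=\Null(P)$ is finite with negative definite intersection matrix. The clean fix is the one the paper uses: since $P$ is big and nef, $P^2>0$, so by the Hodge index theorem the classes orthogonal to $P$ span a negative definite subspace; in particular $\Null(P)$ is finite and its intersection matrix is negative definite, and then your own perturbation argument (add to $P$ small positive multiples of the curves in $\Null(P)$) shows $\Sigma_P\neq\emptyset$. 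With this one-line repair your argument is complete.
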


\begin{proof}
   The statement is Proposition~1.1 from \cite{BFN}. For the
   benefit of the reader, we briefly give the argument.
   For a chamber $\Sigma_P$ we consider the irreducible
   curves $\liste C1r$ with $P\cdot C_i=0$. Then the divisor
   $C_1+\dots+C_r$ has negative definite intersection matrix
   thanks to the index theorem.
   Conversely, given a reduced divisor
   $C_1+\dots+C_r$ with negative definite intersection matrix,
   we consider
   the divisor
   $$
      D \eqdef H+k(C_1+\dots+C_r) \,,
   $$
   where $H$ is a fixed ample divisor and $k$ a positive integer.
   This divisor is big, and for $k\gg0$ the negative part of
   its Zariski decomposition will have
   $C_1\cup\dots\cup C_r$ as its support.
   (The latter fact can for instance
   be seen from the computation of the
   Zariski decomposition according to \cite{Bau}.)
\end{proof}

   It will be useful to introduce two abbreviations: If $D$ is a
   big divisor on a surface, then $\Null(D)$ will denote the set
   of irreducible curves $C$ such that $D\cdot C=0$. If
   $D=P_D+N_D$
   is the Zariski decomposition of $D$, with nef part $P_D$ and
   negative part $N_D$, then $\Neg(D)$ will
   denote the set of components of $N_D$.
   In this notation, the Zariski chamber associated with a big
   and nef divisor $P$ is
   by definition
   $$
      \Sigma_P=\sset{D\in\BigCone(X)\with\Neg(D)=\Null(P_D)} \,.
   $$

\paragraph{Weyl chambers.}
   Let now $X$ be a K3 surface. Apart from the Zariski chamber
   decomposition, there is a second natural decomposition of the
   big cone: the decomposition into Weyl chambers. Consider the
   set $\mathcal R(X)$ of irreducible $(-2)$-curves (smooth
   rational curves of self-intersection $-2$) on $X$, also referred to as
   the set of \textit{simple roots} on $X$. Via the intersection
   product, each of the curves $C\in\mathcal R(X)$ defines a
   hyperplane $C^\perp=\set{D\with D\cdot C=0}$ in $\NS_\R(X)$.
   The connected components of the complement
   $$
      \BigCone(X)\setminus\bigcup_{C\in\mathcal R(X)}C^\perp
   $$
   yield a decomposition of (a dense open subset of)
   the big cone into subcones, the
   \textit{simple Weyl chambers}. The term \textit{simple}
   relates here to the fact that classically one considers not
   only simple roots, i.e, irreducible $(-2)$-curves, but
   \textit{roots}, i.e., $(-2)$-classes; taking chambers with
   respect to $(-2)$-classes leads in general to a finer
   decomposition, whose sets are called \textit{Weyl chambers}.
   Also note that classically one considers instead of the big
   cone $\BigCone(X)$ the \textit{positive cone} $C^+(X)$, i.e.,
   the cone of divisors $D$ with $D^2>0$ and $D\cdot H>0$ for
   some ample $H$. This cone, which is a subcone of the big cone,
   enjoys the advantage of being invariant under the Weyl group.
   In our situation, however, when comparing with the Zariski
   chamber decomposition, it is more natural to work in the big
   cone.

\paragraph{Comparing decompositions.}

   Our first result compares the two decompositions of
   $\BigCone(X)$.
   Proposition 3.9 of \cite{BKS} asserts that the Zariski chamber
   decomposition coincides on K3 surfaces with the Weyl chamber
   decomposition, but its proof is erroneous.
   Instead, the two decompositions may differ, and
   the following result gives the precise condition when this
   happens.

\begin{theorem}\label{thm-equality-criterion}
   Let $X$ be a K3 surface. Then the following conditions are
   equivalent.
   \begin{itemize}\compact
   \item[(i)]
      The interiors of the Zariski chambers coincide with the
      simple Weyl chambers.
   \item[(ii)]
      There is no pair of
      $(-2)$-curves $C_1,C_2\subset X$ such that
      $$
         C_1\cdot C_2= 1 \,.
      $$
   \end{itemize}
   There exist K3 surfaces such that (i) and (ii) hold, and
   there exist K3 surfaces where these conditions do not hold.
\end{theorem}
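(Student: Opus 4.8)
The plan is to deduce the theorem from Theorem~\ref{introthm-inclusions}, reading condition~(i) as the assertion that $\interior Z_{\mathcal S}=W_{\mathcal S}$ for every set $\mathcal S$ of $(-2)$-curves with negative definite intersection matrix. The nef chamber needs no separate discussion: on a K3 surface the ample cone is exactly the part of the positive cone cut out by the conditions $D\cdot C>0$ ranging over all $(-2)$-curves $C$, so the interior of the nef chamber is automatically the fundamental simple Weyl chamber. Since simple Weyl chambers are open, being connected components of the complement of the hyperplanes $C^\perp$, the inclusion $W_{\mathcal S}\subset Z_{\mathcal S}$ of Theorem~\ref{introthm-inclusions}(i) is equivalent to $W_{\mathcal S}\subset\interior Z_{\mathcal S}$; hence $\interior Z_{\mathcal S}=W_{\mathcal S}$ holds if and only if both conditions of Theorem~\ref{introthm-inclusions} are satisfied for $\mathcal S$. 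The elementary fact driving everything is that for two distinct $(-2)$-curves $C,C'$ the intersection matrix of $\set{C,C'}$ has determinant $4-(C\cdot C')^2$, so it is negative definite precisely when $C\cdot C'\in\set{0,1}$. Since principal submatrices of a negative definite matrix are again negative definite, any two members of a negative definite set $\mathcal S$, and any admissible $C'$ occurring in condition~(i) of Theorem~\ref{introthm-inclusions}, meet in $0$ or $1$ points.

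For the implication (ii)$\Rightarrow$(i) I would argue as follows. Assuming no two $(-2)$-curves on $X$ meet in exactly one point, condition~(ii) of Theorem~\ref{introthm-inclusions} holds for every $\mathcal S$, since its curves meet pairwise in $0$ points, and therefore $\interior Z_{\mathcal S}\subset W_{\mathcal S}$. For the reverse inclusion, the hypothesis forces the value $C'\cdot C=1$ in condition~(i) of Theorem~\ref{introthm-inclusions} never to occur; combined with the observation above this leaves only $C'\cdot C=0$, so condition~(i) holds as well and $W_{\mathcal S}\subset\interior Z_{\mathcal S}$. Thus $\interior Z_{\mathcal S}=W_{\mathcal S}$ for every $\mathcal S$, and since both families of chambers are indexed by the negative definite sets of $(-2)$-curves, the two decompositions coincide.

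For the converse (i)$\Rightarrow$(ii) I would use contraposition. Given a pair of $(-2)$-curves with $C_1\cdot C_2=1$, set $\mathcal S=\set{C_1,C_2}$; its intersection matrix has determinant $3$ and is negative definite, so by Lemma~\ref{lemma-neg-def} it labels a genuine Zariski chamber. On the one hand, every divisor $P+a(C_1+C_2)$ with $a>0$ and $P$ a big nef class vanishing on $C_1,C_2$ has negative part $a(C_1+C_2)$ and meets both curves negatively, so it lies in $\interior Z_{\mathcal S}\cap W_{\mathcal S}$. On the other hand, Theorem~\ref{introthm-inclusions}(ii) gives $\interior Z_{\mathcal S}\not\subset W_{\mathcal S}$, a witness being $P+a_1C_1+a_2C_2$ with $a_2>2a_1$, which satisfies $D\cdot C_1=a_2-2a_1>0$ and hence lies outside $W_{\mathcal S}$. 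Since distinct simple Weyl chambers are disjoint, the single set $\interior Z_{\mathcal S}$ cannot be one of them, so the decompositions differ. For the existence claims, a very general quartic surface in $\P^3$ has Picard number one generated by an ample class of positive self-intersection, hence carries no $(-2)$-curves and satisfies~(ii) vacuously, whereas a K3 surface with two $(-2)$-curves meeting transversally in one point --- for instance a quartic containing two incident lines, or an elliptic K3 with a reducible fibre carrying an $A_2$ configuration --- violates~(ii); explicit such surfaces are produced in Section~\ref{sect-example}. The step needing the most care is the label bookkeeping in the argument above: one must consistently distinguish the negative-part support $\Neg(D)$, which indexes $\interior Z_{\mathcal S}$, from the generally smaller set $\sset{C\with D\cdot C<0}$, which indexes $W_{\mathcal S}$, since it is exactly the gap between these two sets created by an intersection-one pair that makes the decompositions diverge.
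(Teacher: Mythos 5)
Your proposal is correct, and it takes the route that the paper acknowledges in one sentence (``the equivalence of (i) and (ii) is a consequence of Propositions~\ref{prop-weil-in-zariski} and \ref{prop-zariski-in-weil}'') but deliberately does not carry out: the paper's own proof is a quick self-contained argument, constructing for $\neg$(ii)$\Rightarrow\neg$(i) two explicit divisors $D=H+a_1C_1+a_2C_2$ (coefficients tuned so that $D\cdot C_1>0>D\cdot C_2$) and $D'=H+k(C_1+C_2)$ lying in the same Zariski chamber but in different Weyl chambers, and proving (ii)$\Rightarrow$(i) by a direct sign computation together with \cite[Prop.~1.7]{BKS}. Your derivation from Theorem~\ref{introthm-inclusions} is legitimate --- there is no circularity, since Propositions~\ref{prop-weil-in-zariski} and \ref{prop-zariski-in-weil} are proved without reference to this theorem --- and it has two real advantages. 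First, the reduction of both inclusion criteria to the single global condition (ii), via the observation that the intersection matrix of a pair of $(-2)$-curves has determinant $4-(C\cdot C')^2$ and hence is negative definite exactly when $C\cdot C'\in\set{0,1}$, is clean and complete. Second, the paper's direct proof of (ii)$\Rightarrow$(i) literally establishes only $\interior Z_{\mathcal S}\subset W_{\mathcal S}$ and leaves implicit why this suffices (one must still rule out that a Weyl chamber meets the boundary of a Zariski chamber); your combination of Proposition~\ref{prop-weil-in-zariski} with the openness of Weyl chambers, giving $W_{\mathcal S}\subset\interior Z_{\mathcal S}$, disposes of this point, as does your two-witness argument showing that $\interior Z_{\set{C_1,C_2}}$ cannot equal \emph{any} simple Weyl chamber (not merely that it differs from $W_{\set{C_1,C_2}}$), which correctly excludes a relabelled matching. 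What the paper's route buys in exchange is independence from the later, heavier machinery --- in particular from Proposition~\ref{prop-weil-in-zariski}, whose proof in the paper itself contains an omitted case-by-case analysis. The existence statements are handled the same way in both: surfaces of Picard number one for (ii), quartics with incident lines against it.

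One small repair is needed in your contrapositive direction: ``$P$ a big nef class vanishing on $C_1,C_2$'' must be sharpened to $\Null(P)=\set{C_1,C_2}$. If $\Null(P)$ contained a further curve $C'$, then by \cite[Prop.~1.7]{BKS} the divisors $P+a_1C_1+a_2C_2$ would lie on the boundary of $Z_{\mathcal S}$ rather than in its interior, and if moreover $C'\cdot C_1=C'\cdot C_2=0$ they would also lie on a Weyl wall, so neither membership you claim would hold. The required $P$ does exist: take the positive part of $H+k(C_1+C_2)$ for $k\gg0$, as in the proof of Lemma~\ref{lemma-neg-def}; writing it as $H+u_1C_1+u_2C_2$ with $u_i\ge0$, one gets $P\cdot C\ge H\cdot C>0$ for every curve $C\notin\set{C_1,C_2}$, so its null locus is exactly $\set{C_1,C_2}$. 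With this adjustment your argument is complete.
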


   The equivalence of (i) and (ii) is a consequence of
   Propositions \ref{prop-weil-in-zariski} and
   \ref{prop-zariski-in-weil} below, which
   give criteria for
   mutual inclusions of Zariski and Weyl chambers.
   We thought,
   however, that it might be useful to provide a quick
   direct argument
   for this basic result right away.

\begin{proof}
   Suppose first that condition (ii) does not hold, i.e., that
   there is a pair of $(-2)$-curves $C_1,C_2$
   with $C_1\cdot C_2=1$. Choose an ample divisor $H$ and
   consider the big divisor
   $$
      D=H+a_1C_1+a_2C_2
   $$
   for positive rational numbers $a_i$. Its Zariski decomposition
   is of the form
   $$
      D=(H+x_1C_1+x_2C_2)+(b_1C_1+b_2C_2) \,,
   $$
   where $x_i\le a_i$ and $b_i\le a_i$ are non-negative rational numbers
   satisfying
   $x_i+b_i=a_i$ and
   \begin{equation}\label{determine-Z}
      (H+x_1C_1+x_2C_2)\cdot C_i=0
   \end{equation}
   for $i=1,2$.
   We claim that we can choose the numbers $a_1,a_2$ in such a way that
   $$
      \Neg(D)=\set{C_1,C_2}
      \quad\mbox{and}\quad
      D\cdot C_1>0,\ D\cdot C_2<0 \,.
      \eqno(*)
   $$
   Granting $(*)$ for a moment, we consider the big divisor
   $$
      D' = H+k(C_1+C_2)
   $$
   for $k>0$.
   Thanks to the equality $C_1\cdot C_2=1$ we
   have $D'\cdot C_1<0$ and $D'\cdot C_2<0$ for $k\gg 0$, and hence
   $\Neg(D')=\set{C_1,C_2}$ for $k\gg 0$.
   So we see that $D$ and $D'$ lie
   in the same Zariski chamber, whereas they lie
   in different simple Weyl chambers.

   Turning to the proof of $(*)$, note first
   that the $x_i$ are independent of the $a_i$,
   since they are uniquely determined by the system of linear equations
   \eqnref{determine-Z}, whose coefficient matrix is negative
   definite.
   Keeping this in mind, we see from the equations
   \be
      &&D\cdot C_1=N\cdot C_1=(b_1C_1+b_2C_2)C_1=-2b_1+b_2 \\
      &&D\cdot C_2=N\cdot C_2=(b_1C_1+b_2C_2)C_2=b_1-2b_2
   \ee
   that $(*)$ will be fulfilled if we take
   $$
      a_1=x_1+1 \quad\mbox{and}\quad a_2=x_2+3 \,.
   $$

   Suppose now that condition (ii) holds.
   Let $D$ be a big divisor that does not lie on the boundary of
   any Zariski chamber.
   It is enough to show that if
   $C$ is any $(-2)$-curve, then
   we have $D\cdot C<0$ if $C\in\Neg(D)$, and $D\cdot C>0$
   otherwise.
   Let $D=P+N$ be the Zariski decomposition.
   Then $N=\sum b_iC_i$ with positive rational numbers $b_i$ and
   $(-2)$-curves $C_i$. Thanks to negative definiteness and
   condition (ii), we have $C_i\cdot C_j=0$ for $i\ne j$.
   Therefore if $C=C_1$, say, then
   $$
      D\cdot C=N\cdot C=b_1 C_1^2=-2b_1<0 \,.
   $$
   If $C\not\in\Neg(D)$, then clearly $D\cdot C\ge 0$.
   But in fact we have $D\cdot C>0$, since otherwise
   $C\in\Null(P)$, which by \cite[Prop.~1.7]{BKS}
   would imply that $D$ lies on the boundary
   of a Zariski chamber.

   Finally, we wish to show that there are cases where (i) and
   (ii) hold and cases where these conditions do not hold.
   First,
   there are smooth quartic surfaces in $\P^3$ that contain a
   pair of intersecting lines (see Sect.~\ref{sect-example}),
   and hence
   (ii) does not
   hold on such surfaces.
   Secondly, in order to get examples where (ii) does hold, one could
   take
   K3 surfaces of Picard number one -- there
   are no $(-2)$-curves at all on such surfaces, so that (ii) is trivially
   satisfied.
   More substantially, there are examples of K3
   surfaces of Picard number three containing three
   $(-2)$-curves, where no two of them intersect with
   intersection number one
   (see
   Proposition~\ref{prop-K3-coinciding}).
\end{proof}

   We just saw that the Zariski chamber decomposition may differ
   from the decomposition into simple Weyl chambers.
   By contrast, our next result shows
   that, somewhat
   surprisingly, the \textit{number}
   of Zariski chambers always equals the number of simple Weyl
   chambers.

\begin{theorem}\label{thm-bijection}
   Let $X$ be a K3 surface. Then there is a canonical
   bijection between the
   set of Zariski chambers in $\BigCone(X)$ and the set of simple
   Weyl chambers in $\BigCone(X)$.
\end{theorem}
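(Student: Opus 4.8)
The plan is to establish a bijection by setting up maps in both directions and showing they are mutually inverse. Both the Zariski chambers (other than the nef chamber) and the simple Weyl chambers are naturally indexed by combinatorial data attached to the $(-2)$-curves, so the strategy is to identify this indexing data and match it up. By Lemma~\ref{lemma-neg-def}, the Zariski chambers different from the nef chamber correspond bijectively to reduced divisors $C_1+\dots+C_r$ whose intersection matrix is negative definite, i.e.\ to sets $\mathcal S$ of $(-2)$-curves with negative definite intersection matrix. The simple Weyl chambers, on the other hand, are the connected components cut out by the hyperplanes $C^\perp$, and a chamber is determined by recording, for each $(-2)$-curve $C$, whether divisors in the chamber intersect $C$ positively or negatively. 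So the first step is to show that each nonempty simple Weyl chamber $W_{\mathcal S}$ is likewise indexed by a finite set $\mathcal S$ of $(-2)$-curves (those intersected negatively), and that this set necessarily has negative definite intersection matrix.

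The natural candidate map sends the Zariski chamber $Z_{\mathcal S}$ to the simple Weyl chamber $W_{\mathcal S}$ carried by the same set $\mathcal S$, and the nef chamber $\Sigma_H$ to the fundamental Weyl chamber (the one indexed by $\mathcal S=\emptyset$, where all $(-2)$-curves are intersected positively). To make this precise, I would first argue that for a set $\mathcal S$ of $(-2)$-curves with negative definite intersection matrix, the set $W_{\mathcal S}$ of big divisors intersecting exactly the curves of $\mathcal S$ negatively is nonempty: this follows by taking a big and nef $P$ with $\Null(P)=\mathcal S$ and perturbing it into the relevant orthant, or directly by the construction $H + k\sum_{C\in\mathcal S}C$ used in Lemma~\ref{lemma-neg-def}, whose proof already shows $\Neg(D)=\mathcal S$; one then checks the sign conditions on intersection numbers. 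Conversely, I must show that every nonempty simple Weyl chamber arises from such an $\mathcal S$ — i.e.\ that the set of $(-2)$-curves intersected negatively by divisors in a fixed chamber is finite and negative definite.

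The key technical point, and the main obstacle, is precisely to show that the set of $(-2)$-curves intersected negatively by a given big divisor $D$ has negative definite intersection matrix (and in particular is finite). This is what makes the indexing set $\mathcal S$ of a simple Weyl chamber legitimate data that can feed into Lemma~\ref{lemma-neg-def}. I expect to obtain this from the index theorem together with the bigness of $D$: if $\liste C1r$ are $(-2)$-curves with $D\cdot C_i<0$, then since $D$ is big, the classes $C_i$ must lie on the ``negative side'' of $D$, and a Hodge-index argument forces their span to be negative definite. Finiteness then follows because a negative definite intersection lattice generated by $(-2)$-classes has bounded rank (at most $\rho(X)-1$) and only finitely many such configurations can meet a fixed chamber. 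This step is essentially the Weyl-chamber analogue of the index-theorem observation already invoked in the proof of Lemma~\ref{lemma-neg-def}.

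Once the indexing is established on both sides, the bijection is forced: the map $Z_{\mathcal S}\mapsto W_{\mathcal S}$ and its inverse $W_{\mathcal S}\mapsto Z_{\mathcal S}$ are both governed by the same family of negative definite sets $\mathcal S$ of $(-2)$-curves, together with the distinguished correspondence between the nef chamber and the fundamental chamber $\mathcal S=\emptyset$. The final step is bookkeeping: verify that $W_{\mathcal S}$ is nonempty if and only if $\mathcal S$ has negative definite intersection matrix (so the two index sets genuinely coincide), and that distinct $\mathcal S$ give distinct chambers on each side, whence the correspondence $\mathcal S\leftrightarrow\mathcal S$ yields the asserted canonical bijection. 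I would emphasize that the bijection is canonical precisely because it is the identity on the indexing data, even though the chambers $\interior Z_{\mathcal S}$ and $W_{\mathcal S}$ need not coincide as subsets of $\BigCone(X)$, as Theorem~\ref{thm-equality-criterion} shows.
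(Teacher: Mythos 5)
Your overall strategy coincides with the paper's: index the Zariski chambers different from the nef chamber by the sets $\mathcal S$ of $(-2)$-curves with negative definite intersection matrix (Lemma~\ref{lemma-neg-def}), index the simple Weyl chambers by the sets of $(-2)$-curves intersected negatively, show the two index sets are equal, and take the bijection to be the identity on this indexing data. Your treatment of one of the two inclusions is also essentially the paper's: if $W_{\mathcal S}\neq\emptyset$, then any $D\in W_{\mathcal S}$ has $\mathcal S\subset\Neg(D)$, and $\Neg(D)$ is finite with negative definite intersection matrix, so $\mathcal S$ is admissible data for Lemma~\ref{lemma-neg-def}.

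The gap is in the other inclusion, i.e., in showing $W_{\mathcal S}\neq\emptyset$ for every negative definite $\mathcal S=\set{\liste C1r}$. Your proposed construction $D=H+k\sum_i C_i$ does not work: the proof of Lemma~\ref{lemma-neg-def} only gives $\Neg(D)=\mathcal S$, i.e.\ membership in the Zariski chamber $Z_{\mathcal S}$, and membership in $Z_{\mathcal S}$ does \emph{not} imply the sign conditions defining $W_{\mathcal S}$ --- that implication is exactly what fails on K3 surfaces, by Proposition~\ref{prop-zariski-in-weil}. Concretely, for an $A_3$-chain with $C_1\cdot C_2=C_2\cdot C_3=1$ and $C_1\cdot C_3=0$ one gets
$$
   D\cdot C_2=H\cdot C_2+k\,(1-2+1)=H\cdot C_2>0
$$
for every $k$, so this $D$ never lies in $W_{\mathcal S}$; for a $D_4$-star the central curve is intersected more and more positively as $k$ grows. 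Hence equal coefficients cannot work: one must take $D=H+\sum_i a_iC_i$ with carefully chosen unequal $a_i$, and the crux is to show that the solution of the system $D\cdot C_j=-1$ ($j=1,\dots,r$) has all $a_i\geq 0$ --- non-negativity is needed both so that $D$ is big and so that $D\cdot C>0$ for every $(-2)$-curve $C\notin\mathcal S$. This is precisely where the paper invokes Lemma~\ref{lemma-inverse-matrix}: the coefficient matrix $(C_i\cdot C_j)$ is negative definite with non-negative off-diagonal entries, so its inverse has all entries $\leq 0$, forcing the solution to be non-negative. Your sketch never brings this lemma (or any substitute) into play; "perturbing into the relevant orthant" is exactly the statement that needs proof, so as written the argument does not close.
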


   Note that the number of chambers may well be infinite, as the
   number of $(-2)$-curves may be infinite
   (cf.~\cite[Remark~7.2]{Kov}).

\begin{proof}
   As before,
   denote by $\mathcal R(X)$ the set of \textit{simple roots}, i.e.,
   the set of $(-2)$-curves on $X$. The set of simple Weyl
   chambers on $X$
   is in bijective correspondence with the set
   $$
      \mathcal W(X)=\sset{\tab[c]{subsets $\mathcal S\subset\mathcal R(X)$ such that
         there is a divisor $D\in\BigCone(X)$ \\ with $D\cdot C<0$ if
         $C\in\mathcal S$
         and $D\cdot C>0$ if $C\in\mathcal R(X)\setminus\mathcal
         S$}}\,,
   $$
   whereas, thanks to
   Lemma~\ref{lemma-neg-def}, the set of Zariski chambers on $X$ is in bijective
   correspondence with the set
   \begin{equation}\label{ZX}
      \mathcal Z(X) =\sset{\tab[c]{finite subsets $\mathcal S\subset\mathcal R(X)$ whose
      intersection matrix \\ is negative definite}}\cup\emptyset
      \,.
   \end{equation}
   (In both cases, the empty set $\emptyset$ corresponds to the
   nef chamber.)
   We will show that $\mathcal W(X)=\mathcal Z(X)$.
   To see this, consider first
   a non-empty set $\mathcal S=\set{\liste C1r}\in\mathcal Z(X)$, and fix an
   ample divisor $H$.
   Then for any non-negative rational numbers $a_i$, the divisor
   $$
      D=H+\sum_i a_iC_i
   $$
   is big.
   We claim that we can choose the $a_i$ in such a way that
   \begin{equation}\label{arrange-a-i}
      D\cdot C_i<0 \qquad\mbox{for all $i$.}
   \end{equation}
   Since clearly $D\cdot C>0$ for all curves $C$ different from
   the $C_i$,
   it follows then that the set $\mathcal S$ is contained
   in $\mathcal W(X)$. To prove \eqnref{arrange-a-i}, consider the
   following system of linear equations for the $a_i$,
   $$
      D\cdot C_j=H\cdot C_j+\sum_{i=1}^r a_iC_i\cdot C_j = -1 \qquad\mbox{for
      $j=1,\dots,r$.}
   $$
   Its coefficient matrix is negative definite and has
   non-negative entries outside of the diagonal. An elementary
   result (see Lemma~\ref{lemma-inverse-matrix})
   implies then that all entries of
   its inverse matrix are $\le 0$. Therefore the solutions
   $a_i$
   are non-negative, and we are done.

   Conversely, consider a non-empty
   set $\mathcal S\in\mathcal W(X)$. The existence of a
   big divisor $D$ with $D\cdot C<0$ for $C\in\mathcal S$ implies
   that $\mathcal S$ is a finite set $\set{\liste
   C1r}$ and that the intersection matrix of $\liste C1r$ is negative
   definite (because the negative part of the Zariski
   decomposition of $D$ must contain the curves $C_i$).
   Therefore $\mathcal S\in\mathcal Z(X)$.
\end{proof}

\section{Inclusions of chambers}\label{sect-inclusions}

   We now give a more detailed description of the mutual
   inclusions of Weyl and Zariski chambers. We continue to use
   the notation $\mathcal Z(X)$ from \eqnref{ZX} for the set that
   consists of all
   sets of $(-2)$-curves whose intersection
   matrix is negative definite. For $\mathcal
   S\in\mathcal Z(X)$, we will write $Z_{\mathcal S}$ for the
   Zariski chamber supported by $\mathcal S$, and $W_{\mathcal
   S}$ for the simple Weyl chamber defined by $\mathcal S$. (In other
   words, $Z_{\mathcal S}$ consists of the big divisors whose
   negative part has support $\bigcup\mathcal S$, and
   $W_{\mathcal S}$ consists of the big divisors that have
   negative intersection with the curves in $\mathcal S$ and
   positive intersection with all other $(-2)$-curves.)

   The following two propositions yield
   Theorem~\ref{introthm-inclusions} from the introduction. As
   mentioned before, this result implies in particular the
   equivalence assertion in Theorem~\ref{introthm-equality-criterion}.

\begin{proposition}\label{prop-weil-in-zariski}
   Let $X$ be a K3 surface, and let $\mathcal S\in\mathcal Z(X)$.
   We have
   $$
      W_{\mathcal S}\subset Z_{\mathcal S}
   $$
   if and only if the following condition holds: If $C'$ is a
   curve with $C'\notin\mathcal S$ such that the intersection
   matrix of the set $\mathcal S\cup\set{C'}$ is negative
   definite, then $C'\cdot C=0$ for all $C\in\mathcal S$.
\end{proposition}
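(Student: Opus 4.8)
The plan is to test an arbitrary divisor $D\in W_{\mathcal S}$ against the two descriptions of its negative part, using throughout that on a K3 surface every component of a negative part is a $(-2)$-curve (it has negative self-intersection, so adjunction forces it to be smooth rational with self-intersection $-2$), so that the Weyl sign conditions apply to it. For the implication $\Leftarrow$, assume the numerical condition and take $D\in W_{\mathcal S}$ with Zariski decomposition $D=P+N$. First I would show $\mathcal S\subset\Neg(D)$: for $C\in\mathcal S$ we have $D\cdot C<0$, and were $C$ not a component of $N$ then $N\cdot C\ge0$ (distinct irreducible curves meet nonnegatively) and $P\cdot C\ge0$ (nefness) would give $D\cdot C\ge0$, a contradiction. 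Next set $\mathcal T=\Neg(D)\setminus\mathcal S$. Any $C'\in\mathcal T$ lies in $\Neg(D)$, whose intersection matrix is negative definite, so the submatrix of $\mathcal S\cup\set{C'}$ is negative definite as well; the hypothesis then forces $C'\cdot C=0$ for all $C\in\mathcal S$. Hence $N$ splits orthogonally as $N=N_{\mathcal S}+N_{\mathcal T}$ with $N_{\mathcal T}\cdot C=0$ for $C\in\mathcal S$. The key step is to rule out $\mathcal T\ne\emptyset$: negative definiteness of $\mathcal T$ gives $N_{\mathcal T}^2<0$, i.e. $\sum_{C'\in\mathcal T}b_{C'}(N_{\mathcal T}\cdot C')<0$ with all $b_{C'}>0$, so some $C'\in\mathcal T$ has $N_{\mathcal T}\cdot C'<0$. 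For that curve $D\cdot C'=N\cdot C'=N_{\mathcal T}\cdot C'<0$ (as $P\cdot C'=0$ and by orthogonality), contradicting the Weyl inequality $D\cdot C'>0$ for $C'\notin\mathcal S$. Thus $\mathcal T=\emptyset$ and $D\in Z_{\mathcal S}$.

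For the converse I would argue by contraposition, so suppose the condition fails: there is a curve $C'\notin\mathcal S$ with $\mathcal S':=\mathcal S\cup\set{C'}$ negative definite and $C'\cdot C_0\ge1$ for some $C_0\in\mathcal S$ (here $C'$ is a $(-2)$-curve since $\mathcal S'$ is negative definite). I would then exhibit a divisor in $W_{\mathcal S}\setminus Z_{\mathcal S}$, mimicking the direct proof of Theorem~\ref{thm-equality-criterion}. Fix an ample $H$ and look for $D=H+\sum_{C\in\mathcal S'}a_CC$. Writing $M'$ for the negative definite intersection matrix of $\mathcal S'$, the nef-part coefficients $x_C$ in $P=H+\sum x_CC$ are determined by $P\cdot C=0$ on $\mathcal S'$, i.e. by $M'x=-(H\cdot C)_C$; by Lemma~\ref{lemma-inverse-matrix} the entries of $(M')^{-1}$ are $\le0$, so $x_C\ge0$, and then $P\cdot C''\ge H\cdot C''>0$ for every curve $C''\notin\mathcal S'$, making $P$ big and nef with $\Null(P)\cap\mathcal R(X)=\mathcal S'$. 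Setting $b_C=a_C-x_C$ and $N=\sum b_CC$, the equality $D=P+N$ is the Zariski decomposition as soon as all $b_C>0$ (uniqueness), so $\Neg(D)=\mathcal S'$ and $D\cdot C=(M'b)_C$ for $C\in\mathcal S'$, while $D\cdot C''>0$ for the remaining $(-2)$-curves is automatic from $P\cdot C''>0$.

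The hard part will be the sign bookkeeping: choosing $b$ so that $D\cdot C_i<0$ for $C_i\in\mathcal S$ and $D\cdot C'>0$, while keeping every $b_C$ strictly positive (so that $\Neg(D)$ is exactly $\mathcal S'$, not $\mathcal S$). I would prescribe the intersection numbers directly, taking $v_{C'}=1$ and $v_{C_i}=-m$ for large $m>0$, and set $b=(M')^{-1}v$, so that $b_C=((M')^{-1})_{C,C'}-m\sum_i((M')^{-1})_{C,C_i}$ is a bounded term plus $m$ times the negative of the row sum of $(M')^{-1}$ over the $\mathcal S$-columns. To get $b_C>0$ for $m\gg0$ I need each such row sum to be \emph{strictly} negative: for a row indexed by $C_i\in\mathcal S$ this follows from the diagonal entry $((M')^{-1})_{C_i,C_i}<0$, and for the row indexed by $C'$ from $((M')^{-1})_{C',C_0}<0$, which holds because $C'\cdot C_0\ge1$ places $C'$ and $C_0$ in one connected component of $\mathcal S'$. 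Here I would invoke the sharpening of Lemma~\ref{lemma-inverse-matrix} that the inverse of such a matrix has strictly negative entries between indices in a single connected component — the standard irreducible (Stieltjes) M-matrix fact, equivalently the statement that a zero inverse entry can only occur across distinct components. With $m$ large all $a_C=x_C+b_C>0$, yielding $D\in W_{\mathcal S}$ with $\Neg(D)=\mathcal S'\ne\mathcal S$, hence $D\notin Z_{\mathcal S}$, which completes the contrapositive. As a consistency check, the special case $\mathcal S=\set{C_2}$, $C'=C_1$ recovers exactly the construction used in the direct proof of Theorem~\ref{thm-equality-criterion}.
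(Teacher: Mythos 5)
Your proof is correct, but it takes a genuinely different route from the paper's in both directions, most substantially in the ``only if'' half. For the implication from the numerical condition to $W_{\mathcal S}\subset Z_{\mathcal S}$, the paper establishes $\mathcal S\subset\Neg(D)$ just as you do, but then passes to the connected component of an alleged extra curve $C'$ inside $\Neg(D)$, shows it is disjoint from $\mathcal S$, and applies Lemma~\ref{lemma-inverse-matrix} to conclude that the coefficients of that component in $N_D$ would be $\le 0$, contradicting effectivity; your orthogonal splitting $N=N_{\mathcal S}+N_{\mathcal T}$ combined with the observation that $N_{\mathcal T}^2<0$ forces $N_{\mathcal T}\cdot C'<0$ for some component $C'\in\mathcal T$ is shorter and avoids both the component analysis and the appendix lemma. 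For the converse, the paper also argues by exhibiting a divisor in $W_{\mathcal S}\setminus Z_{\mathcal S}$, but differently: it starts from a big divisor $B$ with $\Neg(B)=\mathcal S\cup\set{C'}$ (via Lemma~\ref{lemma-neg-def}), keeps its nef part, solves the system $N\cdot C<0$ for all $C\in\mathcal S\cup\set{C'}$ (positivity of the solutions again via Lemma~\ref{lemma-inverse-matrix}), and then resets $c'=\frac14\min c_i$, asserting the desired sign pattern after a case-by-case analysis of A-D-E configurations whose details are explicitly omitted. Your construction instead prescribes the intersection vector directly ($D\cdot C'=1$, $D\cdot C_i=-m$) and inverts $M'$; this is cleaner and in fact supplies a complete argument where the paper leaves a gap, at the price of needing the strict inequality $\left((M')^{-1}\right)_{C',C_0}<0$. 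That sharpening is indeed the standard irreducible Stieltjes/M-matrix fact, and the only case you need (adjacent indices) follows in three lines from Lemma~\ref{lemma-inverse-matrix} itself: setting $u=(M')^{-1}e_{C_0}$, one has $u\le 0$ and $u_{C_0}<0$ (diagonal entries of the inverse of a negative definite matrix are negative, since the inverse is again negative definite), so if $u_{C'}=0$ then
$$
   0=(M'u)_{C'}=\sum_{C\ne C'}(C'\cdot C)\,u_C\le (C'\cdot C_0)\,u_{C_0}<0,
$$
a contradiction. Including this remark would make your proof fully self-contained, and overall your argument is a satisfactory alternative to -- indeed a completion of -- the one in the paper.
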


\begin{proof}
   Suppose that the stated condition holds. We need to show that
   one has $\Neg(D)=S$ for every divisor $D\in W_S$. We may
   certainly assume $\mathcal S\neq\emptyset$. Let then $\mathcal
   S =\set{\liste C1r}$ and $D\in W_{\mathcal S}$, so that
   the inequality
   $D\cdot C_i < 0$ holds for $i=1,\dots,r$. In the Zariski
   decomposition $D=P_D+N_D$ we have $P_D\cdot C_i\geq 0$, and
   hence $N_D\cdot C_i<0$ for all $i$. Therefore we obtain in any
   event the inclusion $\mathcal S\subset\Neg(D)$.

   To see the converse inclusion $\Neg(D)\subset\mathcal S$,
   assume by way of contradiction that there is a curve
   $C'\in\Neg(D)$ with $C'\notin\mathcal S$. From our condition
   we get $C_i\cdot C'=0$ for all $i=1,\dots,r$. Let now
   $\bigcup_{j=1}^{s}C'_j$ be the connected component in
   $\Neg(D)$ containing $C'$. Then none of the curves $C'_j$ can
   be contained in $S$, and therefore
   $$
      0\le D\cdot C'_j = N_D\cdot C'_j = \sum_{i=1}^{s}a'_iC'_i\cdot C'_j
   $$
   for $j=1,\dots,s$. Then Lemma~\ref{lemma-inverse-matrix} implies
   that all $a'_i$ must be $\le 0$. But this is a contradiction
   with the fact that $N_D$ is effective. Thus $\Neg(D)=\mathcal
   S$, which means that $D\in Z_{\mathcal S}$.

   To proof the other direction we show: If there is a curve
   $C'\notin\mathcal S$ such that $\mathcal S \cup \set{C'}$ has
   negative definite intersection matrix and if there is a curve
   $C\in\mathcal S$ such that $C\cdot C'=1$, then there is a
   divisor $D$ with $D\in W_{\mathcal S}$ but $D\notin
   Z_{\mathcal{S}}$. Assume that $C'$ is such a curve. Then there
   is by Lemma \ref{lemma-neg-def} a big divisor $B$ with
   $\Neg(B)=\mathcal S \cup \set{C'}$. We can write its Zariski
   decomposition as
   $$
      B=P_B+N_B=P_B + b'C' +\sum_{i=1}^{r}b_i C_i \,,
   $$
   where $b'$ and $\liste b1r$ are positive rational numbers.
   We claim now that we can find positive rational numbers
   $c'$ and $\liste c1r$ such that the divisor
   $$
      D=P_B + c'C' +\sum_{i=1}^{r}c_i C_i
   $$
   satisfies
   \begin{equation}\label{desired-inequ}
      D\cdot C'>0 \quad\mbox{and}\quad
      D\cdot C_i<0 \mbox{ for } i =1,\dots,r
      \,.
   \end{equation}
   These inequalities then tell us that
   $D\in W_S$, but $D\not\in Z_S$.

   To prove the existence of $D$, we solve first
   the system of inequalities
   \begin{eqnarray*}
      D \cdot C' = N_D \cdot C' &<& 0 \\
      D \cdot C_1 = N_D \cdot C_1 &<& 0 \\
      \vdots                      & &   \\
      D \cdot C_r = N_D \cdot C_r &<& 0 \,.
   \end{eqnarray*}
   for the variables $c',\liste c1r$.
   Lemma~\ref{lemma-inverse-matrix} guarantees that the solutions are positive.
   We claim finally that upon
   replacing $c'$ with $c'=\frac14\min c_i$,
   the desired inequalities \eqnref{desired-inequ} hold.
   This latter fact follows from a case-by-case analysis using the
   fact that the connected components of $\Neg(D)$ are A-D-E
   curves (see Figure~\ref{fig-Dynkin} for the possible
   configurations of their components). We omit the details.
\end{proof}

\begin{proposition}\label{prop-zariski-in-weil}
   Let $X$ be a K3 surface, and let $\mathcal S\in\mathcal Z(X)$.
   We have
   $$
      \interior Z_{\mathcal S}\subset W_{\mathcal S}
   $$
   if and only
   if $C_1\cdot C_2\ne 1$ for all curves $C_1,C_2\subset S$.
\end{proposition}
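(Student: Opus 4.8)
The plan is to reduce everything to a single numerical observation about $\mathcal S$ and then treat the two implications separately. First I would note that negative definiteness already pins down the off-diagonal intersection numbers: for distinct $C_i,C_j\in\mathcal S$ the $2\times 2$ principal submatrix of the intersection matrix has diagonal entries $-2$ and off-diagonal entry $C_i\cdot C_j\ge 0$, and since this submatrix is again negative definite its determinant $4-(C_i\cdot C_j)^2$ is positive, forcing the non-negative integer $C_i\cdot C_j$ into $\{0,1\}$. Consequently the condition ``$C_i\cdot C_j\ne 1$ for all $C_i,C_j\in\mathcal S$'' is equivalent to the curves of $\mathcal S$ being pairwise disjoint, and I would phrase both directions of the proof in these terms.

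For the implication ``$\Leftarrow$'', I would assume the curves of $\mathcal S=\set{\liste C1r}$ pairwise disjoint and take an arbitrary $D\in\interior Z_{\mathcal S}$, with Zariski decomposition $D=P_D+N_D$ and $N_D=\sum_j b_jC_j$, all $b_j>0$. Since $P_D\cdot C_i=0$ for every component of $N_D$, disjointness gives $D\cdot C_i=N_D\cdot C_i=b_iC_i^2=-2b_i<0$ for each $C_i\in\mathcal S$. For a $(-2)$-curve $C\notin\mathcal S$ one has $D\cdot C\ge 0$, and an equality $D\cdot C=0$ would place $C$ in $\Null(P_D)\setminus\Neg(D)$, contradicting $D\in\interior Z_{\mathcal S}$ by \cite[Prop.~1.7]{BKS}; hence $D\cdot C>0$. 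These inequalities are precisely the defining conditions for $D\in W_{\mathcal S}$, giving the inclusion.

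For the implication ``$\Rightarrow$'' I would argue contrapositively: granting $C_1\cdot C_2=1$ for some $C_1,C_2\in\mathcal S$, I would produce a divisor in $\interior Z_{\mathcal S}$ that fails to lie in $W_{\mathcal S}$. Fixing an ample $H$, I first construct a common nef part $P_0\eqdef H+\sum_j x_jC_j$, where the $x_j$ solve the negative definite system $(H+\sum_j x_jC_j)\cdot C_i=0$; by Lemma~\ref{lemma-inverse-matrix} the $x_j$ are non-negative, so that $P_0\cdot C=H\cdot C+\sum_j x_j(C_j\cdot C)>0$ for every irreducible curve $C\notin\mathcal S$, whence $P_0$ is big and nef with $\Null(P_0)=\mathcal S$. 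For any positive rationals $b_1,\dots,b_r$, uniqueness of the Zariski decomposition then identifies $P_0$ and $\sum_j b_jC_j$ as the positive and negative parts of $D\eqdef P_0+\sum_j b_jC_j$, so $\Neg(D)=\mathcal S$ and, since $\Null(P_D)=\Neg(D)$, $D\in\interior Z_{\mathcal S}$. Choosing $b_2$ large relative to $b_1$ (say $b_1=1$, $b_2=3$, the remaining $b_j$ small) yields $D\cdot C_1=N_D\cdot C_1=-2b_1+b_2(C_1\cdot C_2)+\sum_{j\ge 3}b_j(C_1\cdot C_j)\ge 1>0$, so $D\notin W_{\mathcal S}$.

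The step I expect to require the most care is the realization, underlying the ``$\Rightarrow$'' direction, that every tuple of positive coefficients $(b_1,\dots,b_r)$ actually occurs as the negative part of an interior point of $Z_{\mathcal S}$ -- a priori the Zariski decomposition might constrain which negative parts are attainable. The device that removes this difficulty is to fix once and for all the common nef part $P_0$ orthogonal to all of $\mathcal S$, whose nefness and exact null locus rest on the inverse matrix lemma, and then to invoke uniqueness of the Zariski decomposition to read off $N_D$. Once the coefficients $b_j$ are free, the hypothesis that no intersection number $C_i\cdot C_j$ equals $1$ is exactly the numerical obstruction governing the sign of $D\cdot C_i=-2b_i+\sum_{j\ne i}b_j(C_i\cdot C_j)$, and both directions follow as above.
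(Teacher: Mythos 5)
Your proof is correct, and the backward implication is essentially identical to the paper's (same use of negative definiteness to force $C_i\cdot C_j=0$, and the same appeal to \cite[Prop.~1.7]{BKS} to upgrade $D\cdot C\ge 0$ to $D\cdot C>0$). The forward implication, argued contrapositively, differs in a worthwhile way. Both you and the paper exhibit the same kind of counterexample: a nef class orthogonal to all of $\mathcal S$ plus a positive combination $\sum_j b_jC_j$ in which the coefficient of $C_1$ is small relative to that of $C_2$, so that $C_1\cdot C_2=1$ forces $D\cdot C_1>0$. But the paper reaches this divisor indirectly: it starts from $H+\sum a_iC_i$ with $a_i\gg 0$, cites \cite{Bau} to ensure $0<a_i^*<a_i$, passes to a boundary divisor $D^*$ by shrinking the $C_1$-coefficient to its nef-part value, and then perturbs by $\epsilon C_1$ back into the interior using \cite[Prop.~1.8]{BKS}. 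You instead build the chamber's common nef part $P_0=H+\sum_j x_jC_j$ directly from the linear system $P_0\cdot C_i=0$, use Lemma~\ref{lemma-inverse-matrix} to get $x_j\ge 0$ (hence $P_0$ nef and big with $\Null(P_0)=\mathcal S$), and then read off the Zariski decomposition of $P_0+\sum_j b_jC_j$ from uniqueness, so that the coefficients $b_j$ of the negative part can be prescribed freely. This yields an explicit parametrization of $\interior Z_{\mathcal S}$, avoids both the perturbation step and the appeal to \cite{Bau}, and correctly isolates the real issue (which negative parts are attainable). The only point to tighten is your final inference that $\Null(P_D)=\Neg(D)$ implies $D\in\interior Z_{\mathcal S}$: this is precisely the interiority criterion of \cite[Prop.~1.7 and 1.8]{BKS}, which the paper also relies on, so it should be cited rather than treated as immediate.
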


\begin{proof}
   We show first that
   if there are two curves $C_i,C_j\in\mathcal S$ with
   $C_i\cdot C_j=1$, then
   there is a big divisor $E$, which belongs to
   $\interior{Z}_{\mathcal S}$ but not to $W_{\mathcal S}$.
   Setting $\mathcal S=\set{\liste C1r}$, assume that $C_1$ and
   $C_2$ are such curves.
   Choose an ample divisor $H$, and consider the big divisor
   $$
      D=H+\sum_{i=1}^{r} a_iC_i \,,
   $$
   where $\liste a1r$ are positive rational numbers. Its Zariski
   decomposition is of the form
   $$
      D=\underbrace{H+\sum_{i=1}^{r}a^*_iC_i}_{P_D}
      +\underbrace{\sum_{i=1}^{r}\left(a_i-a^*_i\right)C_i}_{N_D}\,,
   $$
   where the coefficients $a^*_i$ satisfy
   $$
      0=P_D\cdot C_i=H\cdot C_i +\sum_{j=1}^{r} a^*_jC_j\cdot
      C_i \qquad\mbox{for}\ i=1,\dots,r.
   $$
   Choosing the $a_i$ large enough,
   we may assume $0<a^*_i<a_i$ (see \cite{Bau}).
   Consider the divisor
   $$
      D^*=H+a^*_1C_1+\sum_{i=2}^{r}a_iC_i
      = P_D+\sum_{i=2}^r (a_i-a_i^*)C_i
      \,.
   $$
   Then we have $P_{D^*}=P_D$, so $\Null(P_{D^*})=\Null(P_D)$, but
   $$
      \Neg(D^*)=\set{\liste C2r}\subsetneq\set{\liste C1r}=\Neg(D)
      \,.
   $$
   Therefore, by \cite[Prop.~1.7]{BKS},
   the divisor $D^*$ lies on the boundary of the
   Zariski chamber $\Sigma_{P_D}$. We have
   $$
      D^*\cdot C_1=P_D\cdot C_1+\sum_{i=2}^r(a_i-a_i^*)C_i\cdot C_1
      \ge(a_2-a_2^*)C_2\cdot C_1=a_2-a_2^*>0 \,.
   $$
   There exists an $\epsilon>0$, such that
   $\left(D^*+\epsilon C_1\right)\cdot C_1>0$, and the divisor
   $D^*+\epsilon C_1$ lies in the interior of $\Sigma_{P_D}$
   by \cite[Prop.~1.8]{BKS}.
   Setting $E=D^{*}+\epsilon C_1$ we are done.

   Conversely, let $D\in\interior{Z}_{\mathcal{S}}$ be a
   big divisor with Zariski decomposition  $D=P_D+N_D$. Then
   $N_D=\sum_{i=1}^{r} a_iC_i$ with positive rational numbers
   $a_i$ and $(-2)$-curves $C_i$. Thanks to negative definiteness
   and the condition in the proposition we have
   $C_i\cdot C_j=0$ for $i\neq j$. Therefore:
   $$
      D\cdot C_i=N_D\cdot C_i=a_i\,C_i^2=-2a_i<0
      \qquad \mbox{for } i=1,\dots,r.
   $$
   For any $(-2)$-curve $C$ different from the $C_i$, we clearly
   have $D\cdot C\ge 0$. We claim that $D\cdot C>0$. In
   fact, if we had $D\cdot C=0$, then
   $P_D\cdot C=0$. So $C\in\Null(P)$, but
   $C\not\in\Neg(D)$, and then, again by
   \cite[Prop.~1.7]{BKS}, $D$ would lie on the boundary of the
   chamber $Z_S$.
\end{proof}

\begin{figure}[t]
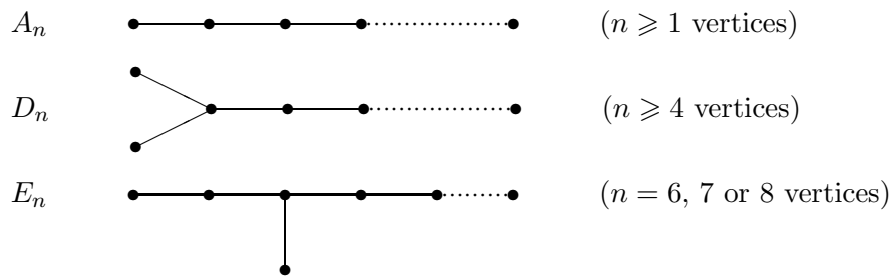

   \begin{flushleft}
      \leftskip=0.1\textwidth
      \ADEdiagrams

      \vspace{2\baselineskip}
      \caption{\label{fig-Dynkin}
         The Dynkin diagrams corresponding to A-D-E curves on a K3
         surface.}
   \end{flushleft}
\end{figure}

\section{Examples}\label{sect-example}

\paragraph{K3 surfaces with differing decompositions.}

   We analyze now in detail
   examples of K3 surfaces where the
   two chamber decompositions differ.
   Specifically, we consider smooth quartic surfaces in $\P^3$
   containing a
   hyperplane section that decomposes into two lines and an
   irreducible conic. It is easy to see -- for instance as in
   \cite[Lemma~2.2b]{Bau97a} -- that such surfaces exist.
   Fix such a quartic surface $X$, and let $L_1,L_2,C$ be two
   lines and an irreducible conic such that $L_1+L_2+C$ is a
   hyperplane section of $X$.
   In the N\'eron-Severi vector space
   $\NS_\R(X)$ we consider the subspace
   $$
      V \eqdef\vecspan{L_1,L_2,C}\subset\NS_\R(X)
   $$
   spanned by the classes of
   $L_1$, $L_2$, and $C$.
   The intersection form is given on $V$ by the matrix
   \begin{equation}\label{L1L2C-matrix}
      \matr{-2 & 1 & 2 \\
         1 & -2 & 2 \\
         2 & 2 & -2}
         \,.
   \end{equation}
   We now show:

\begin{proposition}\label{prop-L1L2C-surface}
   \begin{itemize}\compact
   \item[(i)]
      The intersection $\mathcal C\eqdef\BigCone(X)\cap V$ is
      the interior of the cone generated by the classes of
      $L_1$, $L_2$, and $C$.
   \item[(ii)]
      The curves $L_1$, $L_2$, and $C$ are the only $(-2)$-curves
      in $V$.
   \item[(iii)]
      The intersection $\Nef(X)\cap V$
      consists of the classes $aC+b_1L_1+b_2L_2$ such that the
      real numbers $a,b_1,b_2$ satisfy the inequalities
      $$
         b_1+b_2\ge a,\quad
         2a+b_2\ge 2b_1,\quad
         2a+b_1\ge 2b_2\,.
      $$

   \item[(iv)]
      The cone $\mathcal C$
      decomposes into
      five Zariski chambers and into five simple Weyl
      chambers. These two decompositions do not coincide.
   \end{itemize}
\end{proposition}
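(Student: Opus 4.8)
The plan is to carry out everything inside the three-dimensional lattice $V$, using only the intersection numbers recorded in \eqnref{L1L2C-matrix}: thus $L_1^2=L_2^2=C^2=-2$, $L_1\cdot L_2=1$ and $L_1\cdot C=L_2\cdot C=2$, the hyperplane class $H=L_1+L_2+C$ is ample with $H^2=4$, and the form on $V$ has signature $(1,2)$. I would use throughout that for the general such quartic one has $\NS_\R(X)=V$, i.e.\ Picard number three, so that $L_1,L_2,C$ are the only irreducible curves of negative self-intersection and the nef cone is governed entirely by them. I would establish (ii) and (iii) first, then deduce (i), and finally count chambers for (iv).

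The technical heart is the cone $K\eqdef\set{D\in V\with D\cdot L_1\ge 0,\ D\cdot L_2\ge 0,\ D\cdot C\ge 0}$ cut out by the three curves. A short computation identifies its extreme rays as the classes orthogonal to two of the three curves, namely $L_2+C$, $L_1+C$ and $2L_1+2L_2+C$, and shows that these have self-intersections $0,0,6$, pairwise products $3,6,6$ (all $\ge 0$), and positive intersection with $H$. Hence every non-negative combination of the generators has non-negative square, so $K\subset\overline{C^{+}}$, the closure of the positive cone. Part (ii) is then immediate: a $(-2)$-curve $\Gamma\ne L_1,L_2,C$ would meet each of $L_1,L_2,C$ non-negatively, hence lie in $K\subset\overline{C^{+}}$, forcing $\Gamma^2\ge 0$ and contradicting $\Gamma^2=-2$. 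Part (iii) follows too, since the three displayed inequalities are exactly $D\cdot C\ge 0$, $D\cdot L_1\ge 0$, $D\cdot L_2\ge 0$: a class satisfying them lies in $K\subset\overline{C^{+}}$ and, by (ii), is non-negative on every $(-2)$-curve, which on a K3 surface characterises nefness.

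For (i), the inclusion of the open simplicial cone into $\BigCone(X)$ is easy: if $x,y,z>0$ then for $t=\tfrac12\min(x,y,z)>0$ one has $xL_1+yL_2+zC=tH+\bigl((x-t)L_1+(y-t)L_2+(z-t)C\bigr)$, an ample class plus an effective one, hence big. For the reverse inclusion I would pair a big class $D=xL_1+yL_2+zC$ with the three nef generators found above: big divisors are pseudoeffective and these generators are nef by (iii), so $0\le D\cdot(L_2+C)=3x$, $0\le D\cdot(L_1+C)=3y$ and $0\le D\cdot(2L_1+2L_2+C)=6z$, giving $x,y,z\ge 0$. Thus $\BigCone(X)\cap V$ lies in the closed cone generated by $L_1,L_2,C$; being open (as $V=\NS_\R(X)$) it lies in the interior, so $\mathcal C$ equals exactly that interior.

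Finally, for (iv) I would enumerate the subsets of $\set{L_1,L_2,C}$ with negative definite intersection matrix: the three singletons and $\set{L_1,L_2}$ (determinant $3$) qualify, whereas $\set{L_1,C}$ and $\set{L_2,C}$ have determinant $0$ and the full set is indefinite of signature $(1,2)$; together with the empty set this gives exactly five, so Lemma~\ref{lemma-neg-def} yields five Zariski chambers, and Theorem~\ref{thm-bijection} yields five simple Weyl chambers. That the two decompositions differ is immediate from Theorem~\ref{thm-equality-criterion}, since $L_1\cdot L_2=1$; concretely, Proposition~\ref{prop-zariski-in-weil} gives $\interior Z_{\{L_1,L_2\}}\not\subset W_{\{L_1,L_2\}}$, so a divisor whose negative part is supported on $L_1\cup L_2$ can satisfy $D\cdot L_1>0$ and hence sit in a different Weyl chamber. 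I expect the only genuine obstacle to be the standing geometric input $\NS_\R(X)=V$ (needed so that no curve outside $V$ spoils (iii) or (i)); granting it, parts (ii)--(iv) reduce to the finite lattice bookkeeping above, and the one substantive step is the positivity check establishing $K\subset\overline{C^{+}}$.
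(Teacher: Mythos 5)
Your lattice computations are accurate, and your three dual classes $L_1+C$, $L_2+C$, $2L_1+2L_2+C$ are exactly the nef classes on which the paper's own proof runs; nevertheless there is a genuine gap: you assume throughout that $\NS_\R(X)=V$, i.e.\ that $X$ has Picard number three, and the proposition makes no such assumption. It is stated for an \emph{arbitrary} smooth quartic containing a hyperplane section $L_1+L_2+C$, and the paper is explicit about this point: in part (i) it remarks that \emph{if} one had $V=\NS_\R(X)$ the claim would follow from Kov\'acs' theorem, and then gives an argument valid in general. Your assumption cannot simply be ``granted'': the existence of such quartics with Picard number three is itself nontrivial and is only established later (Proposition~\ref{prop-thequartic}) by a period-space deformation argument, and the Remark following the proposition applies the present statement precisely to surfaces that may carry extra $(-2)$-curves outside $V$ (conditions (iv) and (vi) there). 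The concrete step that breaks without the assumption is your part (iii): statement (ii) only says that $L_1,L_2,C$ are the only $(-2)$-curves \emph{in $V$}, so ``non-negative on every $(-2)$-curve'' does not follow --- a class in $K$ could a priori meet a $(-2)$-curve lying outside $V$ negatively. Since your reverse inclusion in (i) quotes nefness of the generators from (iii), and your (iv) tacitly needs that no chamber defined by curves outside $V$ cuts into $\mathcal C$, the gap propagates through the rest of the argument.

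The repair is small and is exactly the paper's route: prove directly that the three generators of $K$ are nef on all of $X$. Each is a non-negative combination of $L_1,L_2,C$, hence meets every irreducible curve $\Gamma\neq L_1,L_2,C$ non-negatively whatever the Picard number, and its intersections with $L_1,L_2,C$ themselves are the numbers $0,0,3$ resp.\ $0,0,6$ that you computed; consequently $\Nef(X)\cap V=K$ holds unconditionally and your deduction of (i) goes through verbatim. Your signature argument for (ii) (a $(-2)$-curve in $V$ distinct from $L_1,L_2,C$ would lie in $K\subset\overline{C^{+}}$, forcing square $\ge 0$) in fact never needed the assumption and is a nice variant of the paper's effectivity argument. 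For (iv), note additionally that any curve of $X$ orthogonal to all of $L_1,L_2,C$ would be orthogonal to the ample class $L_1+L_2+C$, so every curve outside $\set{L_1,L_2,C}$ is met strictly positively by points of $\mathcal C$; with that, your enumeration of negative definite subsets together with Lemma~\ref{lemma-neg-def}, Theorem~\ref{thm-bijection} and Theorem~\ref{thm-equality-criterion} concludes exactly as in the paper.
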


\begin{figure}
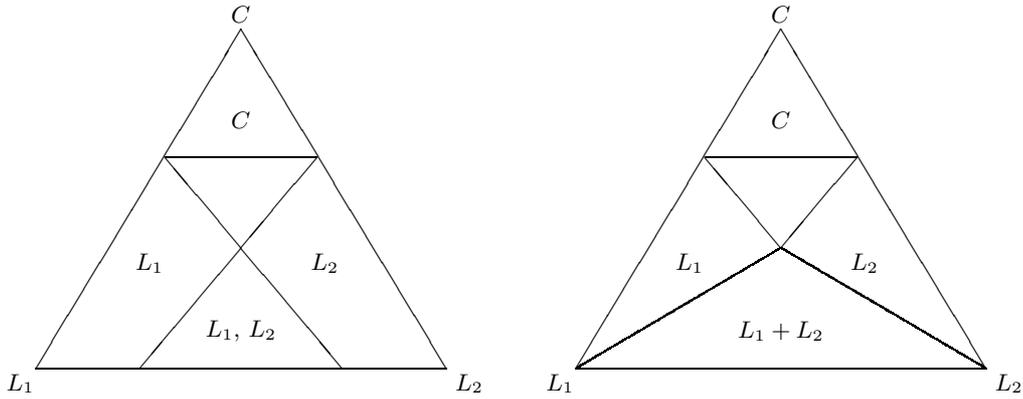

   \begin{center}\footnotesize
      \FigureQuarticChambers
      \caption{\label{fig-quartic-chambers}
         The Weyl chamber decomposition (left picture) and the
         Zariski chamber decomposition (right picture) of the
         cone $\mathcal C$ on the quartic surfaces from
         Proposition~\ref{prop-L1L2C-surface}. The pictures show
         the intersection of the (3-dimensional) cone~$\mathcal
         C$ with the hyperplane in $V$ passing through the
         classes of $L_1$, $L_2$, and $C$. The labels in the
         chambers indicate in the left picture the curves that
         are intersected negatively and in the right picture the
         support of the Zariski chambers. The unlabelled chamber
         in the center is the nef chamber, i.e., the
         chamber whose interior is the ample cone and whose
         closure is the nef cone.
         }
   \end{center}
\end{figure}

\begin{proof}
   (i) As $\BigCone(X)$ is the closure of the effective cone, the
   issue is to see that if a numerical class $D=aC+b_1L_1+b_2L_2\in
   V$ is effective, then all coefficients $a,b_1,b_2$ are
   non-negative. Note that if we had $V=\NS_\R(X)$, then this
   would follow from
   \cite[Theorem~6.1]{Kov}. In our situation it can be seen as
   follows:
   One checks first that the divisor $C+2L_1+2L_2$ is
   nef. From the fact that its intersection with $D$ must be
   non-negative we get then that $a\ge 0$.
   Proceeding in the same way with the nef divisors $C+L_1$ and
   $C+L_2$ we get $b_2\ge 0$ and $b_1\ge 0$ respectively.

   (ii) Suppose that $F$ is a $(-2)$-curve in $V$.
   Then by (i) the numerical
   class of $F$ lies in the closure of $\mathcal C$.
   Writing
   $F=aC+b_1L_1+b_2L_2$ for this class,
   we have
   $$
      (aC+b_1L_1+b_2L_2)F=F^2=-2<0 \,.
   $$
   As $F$ is irreducible and the divisor in brackets
   is effective, this can only happen if
   $F$ is one of the curves $C,L_1,L_2$.

   (iii)
   A class $aC+b_1L_1+b_2L_2$ is nef if and only if it meets the
   curves $C,L_1,L_2$ non-negatively. These three
   conditions yield the
   asserted inequalities.

   (iv)
   The intersection matrix \eqnref{L1L2C-matrix}
   has
   exactly four negative definite prinicipal submatrices,
   corresponding to the four
   divisors
   $$
      L_1,\ L_2,\ L_1+L_2,\ C \,.
   $$
   Therefore,
   by Lemma~\ref{lemma-neg-def},
   there are exactly five Zariski chambers supported on the
   curves $L_1,L_2,C$, and hence
   by Theorem~\ref{thm-bijection}
   also exactly five Weyl chambers.
   The fact that the decompositions do not coincide is of course
   a consequence of Theorem~\ref{thm-equality-criterion}.
   Alternatively, it can be verified directly by computing the
   chambers from the shape of $\Nef(X)\cap V$, i.e., using the
   inequalities given in (iii).
\end{proof}

\begin{remark}\rm
   The two decompositions of $\mathcal C$ are shown in
   Figure~\ref{fig-quartic-chambers}. Using
   Theorem~\ref{introthm-inclusions} we get also information
   about the chamber decomposition of the cone $\BigCone(X)$
   (which might be bigger than $\mathcal C$). In particular, we
   have the following inclusions and non-inclusions of chambers
   on $X$:
   \begin{itemize}
   \item[(i)]
      $\interior Z_\emptyset=W_\emptyset$, as always,
   \item[(ii)]
      $\interior Z_{\set{L_1}}\subsetneq W_{\set{L_1}}$
      and
      $\interior Z_{\set{L_2}}\subsetneq W_{\set{L_2}}$,
   \item[(iii)]
      $\interior Z_{\set{C}}\subset W_{\set{C}}$,
   \item[(iv)]
      $\interior Z_{\set{C}}=W_{\set{C}}$,
      if and only if there is no $(-2)$-curve $C'\subset X$ with
      $C'\cdot C=1$.
   \item[(v)]
      $\interior Z_{\set{L_1,L_2}}\not\subset W_{\set{L_1,L_2}}$
   \item[(vi)]
      $W_{\set{L_1,L_2}}\subset Z_{\set{L_1,L_2}}$, if
      any only if there is no $(-2)$-curve $C'\subset X$ such
      that $C'\cdot L_1=0$ and $C'\cdot L_2=1$ or conversely.
   \end{itemize}
\end{remark}

   The situation becomes particulary transparent when
   one has quartics as above whose Picard number is exactly
   three.
   We will show that
   that surfaces with this property exist:

\begin{proposition}\label{prop-thequartic}
   There exist smooth quartic surfaces $X\subset\P^3$
   of Picard number three
   having a hyperplane section of the form $L_1+L_2+C$ such that
   $L_1$ and $L_2$ are lines and $C$ is a smooth conic.
\end{proposition}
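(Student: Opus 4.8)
The plan is to construct $X$ abstractly, by lattice theory and the Torelli theorem, and then to recover the geometry of the polarization $H=L_1+L_2+C$ purely from the arithmetic of the rank-three N\'eron--Severi lattice. First I would fix the even lattice $\Lambda$ of signature $(1,2)$ with basis $\ell_1,\ell_2,c$ and Gram matrix \eqnref{L1L2C-matrix}, set $H\eqdef\ell_1+\ell_2+c$, and record $H^2=4$, $\ell_1\cdot H=\ell_2\cdot H=1$ and $c\cdot H=2$. Since $\Lambda$ has rank $3$ and its discriminant is $18$, Nikulin's embedding criterion provides a primitive embedding $\Lambda\hookrightarrow\Lambda_{K3}=U^{3}\oplus E_8(-1)^{2}$. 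By the surjectivity of the period map and the Torelli theorem there is then a K3 surface carrying a marking $\NS(X)\cong\Lambda$; choosing the period point very general in the $17$-dimensional family so obtained, I may assume $\NS(X)=\Lambda$, so that the Picard number is exactly three. Finally, since the Weyl group generated by the reflections in $(-2)$-classes acts simply transitively on the chambers of the positive cone, I may compose the marking with a Weyl element so that the ample cone of $X$ is identified with the chamber containing $H$; then $H$ is ample and $\ell_1,\ell_2,c$ are represented by effective $(-2)$-divisors $L_1,L_2,C$.

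The arithmetic input that makes everything work is the single congruence
$$
   v^2\equiv(v\cdot H)^2 \ (\mathrm{mod}\ 3)\qquad\mbox{for all }v\in\Lambda,
$$
which is immediate upon reducing the Gram matrix \eqnref{L1L2C-matrix} modulo $3$. It yields at once the two facts I need. No $(-2)$-class is orthogonal to $H$ --- otherwise $-2\equiv0$ modulo $3$ --- so $H$ lies in the interior of its chamber and is genuinely ample. And there is no isotropic class $v$ with $v\cdot H\in\set{1,2}$, since that would force $0\equiv1$ modulo $3$. By Saint-Donat's criterion for degree-four polarizations (an ample $H$ with $H^2=4$ is very ample as soon as there is no curve $E$ with $E^2=0$ and $0<E\cdot H\le2$), these two observations show that $|H|$ embeds $X$ as a smooth quartic in $\P^3$.

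It then remains to see that $L_1,L_2$ are lines and $C$ a smooth conic. A nonzero effective class meets the ample $H$ positively, so a class of $H$-degree $1$ cannot be decomposed; hence $\ell_1,\ell_2$ are irreducible. A degree-one effective class is moreover forced to be a $(-2)$-class --- the Hodge index theorem excludes positive square and the congruence above excludes square zero --- and a short direct computation shows that the only $(-2)$-classes of $H$-degree $1$ are $\ell_1$ and $\ell_2$. Consequently the conic class $c$, which has degree $2$, cannot be written as a sum of two degree-one classes (the only candidate sum being $\ell_1+\ell_2\ne c$), so it too is irreducible. Adjunction now makes $L_1,L_2,C$ smooth rational curves of degrees $1,1,2$, i.e.\ two lines and a conic, with $L_1+L_2+C\in|H|$ a hyperplane section.

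The step I expect to be the main obstacle is the realization in the first paragraph: arranging a single K3 surface for which, simultaneously, the marking can be chosen with ample cone equal to the chamber of $H$ (so that $H$ is ample and $\ell_1,\ell_2,c$ are effective) and the period is general enough to pin the Picard number down to exactly three. The very-ampleness and irreducibility statements depend only on the lattice $\Lambda$ and are therefore constant across the family, so once this realization is secured the remaining verifications --- Saint-Donat's conditions and the irreducibility of the three curves --- are routine consequences of the congruence above.
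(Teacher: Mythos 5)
Your proposal is correct, but it takes a genuinely different route from the paper's. The paper argues geometrically: it starts from an explicit smooth quartic $X_0\subset\P^3$ that already contains two lines and a conic as a hyperplane section (existence cited from \cite[Lemma~2.2b]{Bau97a}), marks it, and then replaces it by a very general nearby point of the locus $\Omega'=\Omega\cap\lambda_1^\perp\cap\lambda_2^\perp\cap\gamma^\perp$ in the period domain; for such a point the Picard group is generated over $\Q$ by $\lambda_1,\lambda_2,\gamma$, and Riemann--Roch together with openness of irreducibility under small deformation keeps $L_1,L_2,C$ effective and irreducible, so the quartic geometry is \emph{inherited} from $X_0$. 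You instead build the surface from the lattice alone: Nikulin's embedding theorem gives a primitive embedding of the rank-three lattice \eqnref{L1L2C-matrix} into $\Lambda_{K3}$, surjectivity of the period map and Torelli produce a K3 surface whose N\'eron--Severi lattice is exactly this lattice, the Weyl group makes $H$ ample, and very ampleness is then \emph{proved} via Saint-Donat's criterion using the congruence $v^2\equiv(v\cdot H)^2 \pmod 3$ --- which indeed holds for \eqnref{L1L2C-matrix} and excludes both $(-2)$-classes orthogonal to $H$ and isotropic classes of $H$-degree $1$ or $2$. Your identification of the curve configuration also checks out: the Diophantine computation for $(-2)$-classes of degree one reduces to $1+4z-8z^2$ being a non-negative perfect square, forcing $z=0$ and hence the classes $\ell_1,\ell_2$ only. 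What your approach buys: it is independent of the prior existence of quartics with this special hyperplane section, and it replaces deformation-openness arguments by closed lattice-theoretic criteria, at the cost of heavier standard machinery (Nikulin, surjectivity of the period map, Saint-Donat). What the paper's approach buys: given the cited existence result, it is much shorter and needs no very-ampleness criterion. Two small repairs to your write-up: when excluding decompositions of $c$, the candidate sums of degree-one classes also include $2\ell_1$ and $2\ell_2$ (both likewise $\ne c$); and alongside the Weyl element you may need the sign $-1$, in case the marked class $H$ lands in the wrong component of $\set{x^2>0}$.
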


   On such surfaces one has
   $V=\NS_\R(X)$, and then the
   conditions in (iv) and (vi) are fulfilled by
   Proposition~\ref{prop-L1L2C-surface}. Therefore, in that case
   \begin{itemize}
   \item
      $\interior Z_{\set{L_1}}\subsetneq W_{\set{L_1}}$
      and
      $\interior Z_{\set{L_2}}\subsetneq W_{\set{L_2}}$,
   \item
      $\interior Z_{\set{C}}= W_{\set{C}}$,
   \item
      $W_{\set{L_1,L_2}}\subsetneq Z_{\set{L_1,L_2}}$.
   \end{itemize}
   The picture in Figure~\ref{fig-quartic-chambers}
   describes then the whole big cone of $X$.

\begin{proof}[Proof of Proposition~\ref{prop-thequartic}]
   Consider the \textit{K3 lattice}
   $$
      \Lambda_{K_3}=U^3\oplus(-E_8)^2
   $$
   where $U$ is the lattice $\Z^2$ with the bilinear form
   given by the matrix
   $$
      \matr{0 & 1\\ 1 & 0}
   $$
   and $-E_8$ is the lattice $\Z^8$ with the bilinear form
   given by
   $$
      \matr{
         -2&1&0&0&0&0&0&0 \\
         1&-2&1&0&0&0&0&0 \\
         0&1&-2&1&0&0&0&0 \\
         0&0&1&-2&1&0&0&0 \\
         0&0&0&1&-2&1&0&1 \\
         0&0&0&0&1&-2&1&0 \\
         0&0&0&0&0&1&-2&0 \\
         0&0&0&0&1&0&0&-2
         }
   $$
   For any K3 surface $X$, there is an isomorphism
   $\sigma:H^2(X,\Z)\to\Lambda_{K3}$.
   The 21-dimensional \textit{period space}
   $$
      \Omega=\sset{\C\cdot x\in\P(\Lambda_{K_3}\tensor\C)\with\bilin xx=0,\
         \bilin x{\bar x}>0}
   $$
   is a fine moduli space for marked K3 surfaces, i.e., for pairs
   $(X,\sigma)$ consisting of a K3 surface $X$ and an
   isormorphism
   $\sigma:H^2(X,\Z)\to\Lambda_{K3}$
   (see e.g.\ \cite{BPV} for the facts mentioned here).

   Let now $X_0\subset\P^3$ be a smooth quartic surface
   admitting a hyperplane section of the form $L_1+L_2+C$
   as considered above, and let
   $\lambda_1,\lambda_2,\gamma\in\Lambda_{K3}$ be the lattice
   vectors corresponding to the classes of
   $L_1,L_2,C$ under a fixed isomorphism
   $H^2(X,\Z)\isomto\Lambda_{K3}$. The point $x_0\in\Omega$
   corresponding to $X_0$ is contained in the intersection
   $$
      \Omega'=\Omega\cap\lambda_1^\perp\cap\lambda_2^\perp\cap\gamma^\perp
      \ .
   $$
   Consider now a small deformation $X$ of $X_0$
   corresponding to
   a point $x\in\Omega'$ close to $x_0$.
   As $\Lambda_{K3}$ is a countable set,
   the generic such $x$ is not contained in any intersection
   $$
      \Omega'\cap\alpha^\perp
   $$
   where $\alpha\in\Lambda_{K_3}$ is a lattice vector different
   from $\lambda_1,\lambda_2,\gamma$. But this implies that the
   Picard group of $X$ is generated over $\Q$ precisely by the
   classes of $L_1,L_2$ and $C$. The Riemann-Roch theorem implies
   that these $(-2)$-classes are on the deformed surface still
   represented by effective divisors. Therefore, when
   $x$ is close enough to $x_0$, they are still represented by
   irreducible curves.
\end{proof}

\paragraph{K3 surfaces with coinciding decompositions.}

   If a K3 surface does not contain any $(-2)$-curves at all --
   for instance if its Picard group is one-dimensional --
   then of course the Zariski chamber decomposition coincides
   with the Weyl chamber decomposition. More substantial examples
   are given by the following proposition. The double
   covering construction in its proof was
   suggested to us by T.~Szemberg.

\begin{proposition}\label{prop-K3-coinciding}
   There exist K3 surfaces $X$ of Picard number three that contain
   three $(-2)$-curves $F_1,F_2,C$ such that
   $$
      F_1\cdot F_2=0,\quad F_1\cdot C=F_2\cdot C=2 \,,
   $$
   and such that there are no other $(-2)$ curves on $X$.

   The decomposition of the big cone of $X$ into Zariski chambers
   is the same as the decomposition
   into simple Weyl chambers. It consists of five chambers.
\end{proposition}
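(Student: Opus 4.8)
The plan is to realize $X$ as a double cover of a del Pezzo surface, using the three $(-1)$-curves of the base to produce the three prescribed $(-2)$-curves. First I would take $Y=\Bl_2\P^2$, the blow-up of $\P^2$ at two points $p_1,p_2$. This is a del Pezzo surface of degree $K_Y^2=7$, and it carries exactly three $(-1)$-curves: the two exceptional divisors $E_1,E_2$ and the strict transform $G=\ell-E_1-E_2$ of the line through $p_1,p_2$; they satisfy $E_1\cdot E_2=0$ and $G\cdot E_1=G\cdot E_2=1$. Since $-K_Y$ is ample, $\lvert-2K_Y\rvert$ is base-point free, so by Bertini a general member $B\in\lvert-2K_Y\rvert$ is smooth and meets each of $E_1,E_2,G$ transversally; one checks $B\cdot E_1=B\cdot E_2=B\cdot G=2$. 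The double cover $\pi\colon X\to Y$ branched along $B$ then satisfies $K_X=\pi^*(K_Y+\frac{1}{2}B)=0$ and is a smooth K3 surface. Because each of the three $(-1)$-curves meets $B$ in exactly two points, its reduced preimage is an irreducible rational curve; setting $F_1=\pi^{-1}(E_1)$, $F_2=\pi^{-1}(E_2)$, $C=\pi^{-1}(G)$ and using $(\pi^*E)^2=2E^2$ together with $\pi^*E\cdot\pi^*E'=2\,E\cdot E'$, I obtain $F_i^2=C^2=-2$ and the required numbers $F_1\cdot F_2=0$, $F_1\cdot C=F_2\cdot C=2$.

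The heart of the matter is to arrange $\rho(X)=3$ and to exclude any further $(-2)$-curves, and I expect this to be the main obstacle. Here I would run the countability argument already used for Proposition~\ref{prop-thequartic}: the double covers attached to the members of $\lvert-2K_Y\rvert$ form a family of K3 surfaces whose periods sweep out a subdomain on which the rank-$3$ lattice $\pi^*\NS(Y)=\vecspan{F_1,F_2,C}$ stays algebraic, and for a very general member no algebraic class beyond $\pi^*\NS(Y)$ occurs. Equivalently, the anti-invariant part of $\NS(X)$ under the covering involution $\iota$ vanishes, so $\rho(X)=3$ and every divisor class on $X$ is $\iota$-invariant. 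Consequently any irreducible $(-2)$-curve $D$ is $\iota$-invariant and hence of the form $\pi^{-1}(E)$ for an irreducible curve $E\subset Y$; from $-2=D^2=2E^2$ I read off $E^2=-1$, so $E$ is one of the three $(-1)$-curves of $Y$ and $D\in\set{F_1,F_2,C}$. The involution argument is what makes this step manageable, reducing the exclusion of extra negative curves to the elementary fact that a degree-$7$ del Pezzo surface has exactly three lines.

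Finally I would read off the chamber structure. Since the only $(-2)$-curves are $F_1,F_2,C$ and their pairwise intersection numbers are $0,2,2$, no two of them meet in a single point, so Theorem~\ref{thm-equality-criterion} gives that the interiors of the Zariski chambers coincide with the simple Weyl chambers. To count them I apply Lemma~\ref{lemma-neg-def}: the Zariski chambers different from the nef chamber correspond to the subsets of $\set{F_1,F_2,C}$ with negative definite intersection matrix. Inspecting the Gram matrix, the negative definite subsets are $\set{F_1}$, $\set{F_2}$, $\set{C}$ and $\set{F_1,F_2}$ — the pairs $\set{F_i,C}$ are only negative semidefinite and the full triple is indefinite of signature $(1,2)$ — so together with the nef chamber there are exactly five Zariski chambers, and hence, by the coincidence just established (or by Theorem~\ref{thm-bijection}), exactly five simple Weyl chambers.
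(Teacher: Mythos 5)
Your construction and your endgame coincide with the paper's: your branch curve $B\in|-2K_Y|$ has class $6H-2E_1-2E_2$, which is exactly the proper transform of a two-nodal sextic that the paper uses, your three curves $F_1,F_2,C$ are the same pullbacks, and the final count (four negative definite subsets of $\set{F_1,F_2,C}$, hence five chambers by Lemma~\ref{lemma-neg-def}, coincidence of the two decompositions by Theorem~\ref{thm-equality-criterion}) is identical to the paper's. The divergence is in the middle step -- arranging $\rho(X)=3$ and excluding further $(-2)$-curves -- and that is where your argument has a genuine gap.

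You assert that the periods of the family of double covers $X_B$, $B\in|-2K_Y|$, ``sweep out a subdomain'' of the period space, so that the very general member has $\NS(X_B)=\pi^*\NS(Y)$, and you present this as the same countability argument as in Proposition~\ref{prop-thequartic}. It is not. In Proposition~\ref{prop-thequartic} (and in the paper's proof of the present proposition) one deforms the surface as an \emph{abstract marked K3} inside $\Omega'=\Omega\cap\lambda_1^\perp\cap\lambda_2^\perp\cap\gamma^\perp$; because $\Omega$ is a fine moduli space for marked K3 surfaces, every point of $\Omega'$ near $x_0$ is automatically the period of a K3 on which the three classes are algebraic, and countability of the lattice then yields a nearby surface with $\rho=3$. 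For your family, by contrast, you need the period map of the family of double covers (of dimension $21-\dim\mathrm{Aut}(Y)=17$) to have $17$-dimensional image in $\Omega'$ -- equivalently, that the Noether--Lefschetz loci are \emph{proper} subvarieties of $|-2K_Y|$, equivalently that the anti-invariant part of $\NS$ vanishes for very general $B$. That is a true statement, but it is a theorem (provable via Nikulin's theory of non-symplectic involutions, or an infinitesimal Torelli computation for double covers), not a consequence of countability alone; and your exclusion of extra $(-2)$-curves hinges on it, since the involution argument requires $X$ to actually \emph{be} one of the double covers. The paper sidesteps this entirely: after the abstract deformation (where the covering involution is no longer available) it excludes extra $(-2)$-curves by a cone argument -- write $D=aC+b_1F_1+b_2F_2$, get $a,b_1,b_2\ge0$ by intersecting with the nef classes $C+F_1$, $C+F_2$, $C+F_1+F_2$, then use $D\cdot(aC+b_1F_1+b_2F_2)=D^2=-2<0$ against irreducibility, as in Proposition~\ref{prop-L1L2C-surface}(ii). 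A secondary slip in your involution step: an $\iota$-invariant irreducible $(-2)$-curve $D$ satisfies $D=\pi^{-1}(E)$ with $D^2=2E^2$ only if $D$ is not contained in the ramification curve $R$ (there one has $\pi^*E=2D$ instead); this case must be ruled out, e.g.\ because $R\cong B$ is irreducible with $R^2=14\ne-2$. So either supply the dominance/Nikulin input, or replace your middle step by the paper's abstract deformation plus the cone argument.
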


\begin{proof}
   Let $B\subset\P^2$ be a sextic curve that has two ordinary
   double points $x_1$ and $x_2$
   and no other singularities, and consider the
   blow-up $f:Y=\Bl_{x_1,x_2}(\P^2)\to\P^2$ at these two points. On
   the blow-up let $B'$ be the proper transform of $B$, i.e.,
   $$
      B'=f^*B-2E_1-2E_2 \,,
   $$
   where $E_1$ and $E_1$ are the exceptional divisors over $x_1$
   and $x_2$ respectively. The line bundle $\O_Y(B')$ is then
   2-divisible, so that we can consider the double covering
   $$
      g:X\to Y
   $$
   that is ramified over $B'$.
   As $K_{X}=g^*(K_Y+\frac12 B')=0$ and
   $h^1(\O_X)=0$ (cf.~\cite[Sect.~22]{BPV}),
   the surface $X$ is a K3 surface.
   One checks that the pullbacks
   $$
      F_1=g^*E_1,\quad
      F_2=g^*E_2,\quad
      C=g^*(H-E_1-E_2)
   $$
   are $(-2)$-curves on $X$.
   These curves generate a subspace of the
   N\'eron-Severi vector space of dimension three, on which the
   intersection form is given by the matrix
   \begin{equation}\label{F1F2C-matrix}
      \matr{
         -2 & 0 & 2 \\
         0 & -2 & 2 \\
         2 & 2 & -2
         }
   \end{equation}
   Arguing as in the proof of Proposition~\ref{prop-thequartic},
   i.e., replacing $X$ with a small deformation in the
   period space, we obtain a K3 surface of Picard number three
   with curves $F_1,F_2,C$ intersecting as in
   \eqnref{F1F2C-matrix}
   that generate the N\'eron-Severi group
   over $\Q$.
   For simplicity, let us
   denote the deformed surface again by $X$.

   We show next that there are no other $(-2)$-curves on $X$.
   Suppose that $D$ is a $(-2)$-curve on $X$, and write in
   numerical equivalence
   $$
      D=aC+b_1F_1+b_2F_2
   $$
   with rational numbers $a,b_1,b_2$.
   We can now proceed as in the proof of
   Proposition~\ref{prop-L1L2C-surface}(i)
   to show that $a\ge 0$ and
   $b_1,b_2\ge 0$. (In fact, intersect $D$ with the nef divisors
   $C+F_1$, $C+F_2$, and $C+F_1+F_2$.)
   One sees then as in the proof of
   Proposition~\ref{prop-L1L2C-surface}(ii) that $F_1$, $F_2$,
   and $C$ are
   the only $(-2)$-curves on $X$.
   The intersection matrix
   \eqnref{F1F2C-matrix}
   has exactly four negative definite principal submatrices,
   corresponding to the divisors
   $$
      F_1,\ F_2,\ F_1+F_2,\ C \ .
   $$
   With Lemma~\ref{lemma-neg-def} we conclude that
   there are exactly five Zariski chambers, and
   Theorem~\ref{thm-equality-criterion} implies that the chamber
   decompositions coincide.
\end{proof}

\section*{Appendix}
\renewcommand\thesection{A}
\setcounter{satz}{0} 

   The following useful
   lemma from \cite{BKS} is used
   several times in
   the present paper. While probably well-known, a
   somewhat technical proof was included in \cite{BKS} for lack of a
   reference. Here we give a much simpler argument, which is
   inspired by the beginning of the proof of
   \cite[Theorem~3.2]{PanRos}.

\begin{lemma}\label{lemma-inverse-matrix}
   Let $S$ be a negative definite $(r\times r)$-matrix over the
   reals such that $s_{i,j}\ge 0$ for $i\ne j$. Then all
   entries of the inverse matrix $S\inverse$ are $\le 0$.
\end{lemma}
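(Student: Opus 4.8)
Proof proposal for Lemma A.1 (the inverse-matrix lemma):

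The plan is to prove that a negative definite matrix with nonnegative off-diagonal entries has an entrywise nonpositive inverse, by passing to the positive definite matrix $M = -S$ and showing that $M^{-1}$ has nonnegative entries. This is a statement about $M$-matrices (or Stieltjes matrices), so I expect the cleanest route to be a direct analytic argument rather than an appeal to general theory. Setting $M = -S$, the matrix $M$ is symmetric positive definite with $m_{i,j} \le 0$ for $i \ne j$ and positive diagonal entries (positivity of the diagonal follows from positive definiteness). The goal becomes: every entry of $M^{-1}$ is $\ge 0$.

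Following the hint that the argument is inspired by the proof of \cite[Theorem~3.2]{PanRos}, I would argue by a minimum-principle / contradiction applied to the columns of $M^{-1}$. Fix a standard basis vector $e_k$ and let $x = M^{-1}e_k$, so that $Mx = e_k$; I must show every coordinate $x_i \ge 0$. Suppose not, and let $x_\ell = \min_i x_i < 0$ be the most negative coordinate. Consider the $\ell$-th equation $\sum_j m_{\ell,j} x_j = \delta_{\ell,k}$. I would isolate $m_{\ell,\ell} x_\ell = \delta_{\ell,k} - \sum_{j \ne \ell} m_{\ell,j} x_j$ and exploit the sign pattern: since $m_{\ell,j} \le 0$ for $j \ne \ell$ and $x_j \ge x_\ell$ for all $j$, the term $-\sum_{j \ne \ell} m_{\ell,j} x_j = \sum_{j \ne \ell}(-m_{\ell,j})x_j \ge \sum_{j \ne \ell}(-m_{\ell,j})x_\ell$. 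Combining this with $\delta_{\ell,k} \ge 0$ and $m_{\ell,\ell} > 0$ should force a contradiction with $x_\ell$ being strictly negative, provided the row sums behave correctly.

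The main obstacle I anticipate is that a positive definite $M$-matrix need not be diagonally dominant, so the naive row-sum estimate $m_{\ell,\ell} + \sum_{j\ne\ell} m_{\ell,j} > 0$ is not available for free, and the one-line minimum-principle may not close. To handle this robustly I would instead use the characterization via positive definiteness directly: write $x = M^{-1}e_k$ and split $x = x^+ - x^-$ into its nonnegative and nonpositive parts, $x^+_i = \max(x_i,0)$ and $x^-_i = \max(-x_i,0)$, with disjoint supports. Then I would examine the quadratic form $\bilin{Mx}{x^-} = \bilin{e_k}{x^-} = -x^-_k \le 0$. On the other hand, expanding $\bilin{Mx^+}{x^-} = \sum_{i,j} m_{i,j} x^+_i x^-_j$ consists only of off-diagonal terms $m_{i,j}$ with $i,j$ in disjoint supports, each of which is $\le 0$ multiplied by nonnegative factors, so $\bilin{Mx^+}{x^-} \le 0$; hence $-\bilin{Mx^+}{x^-} \ge 0$. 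Writing $\bilin{Mx}{x^-} = \bilin{Mx^+}{x^-} - \bilin{Mx^-}{x^-}$ and using $\bilin{Mx^-}{x^-} \ge 0$ by positive definiteness, I get $0 \ge \bilin{Mx}{x^-} = \bilin{Mx^+}{x^-} - \bilin{Mx^-}{x^-}$, which yields $\bilin{Mx^-}{x^-} \le \bilin{Mx^+}{x^-} \le 0$. Positive definiteness then forces $x^- = 0$, i.e.\ $x \ge 0$. Since $k$ was arbitrary, every column of $M^{-1}$ is nonnegative, so every entry of $S^{-1} = -M^{-1}$ is $\le 0$, as claimed.
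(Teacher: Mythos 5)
Your overall strategy is exactly the one the paper uses: split the candidate solution into its nonnegative and nonpositive parts, use the sign hypothesis on the off-diagonal entries to control the cross term, and use definiteness of the quadratic form to force the negative part to vanish. (The paper works directly with the negative definite matrix $S$: for $a=S\inverse b$ with $b\le 0$ it writes $a=p-q$ with $p,q\ge 0$ of disjoint support, notes $p^tSq\ge 0$, and plays this off against $b^tq\le 0$ and the negative definiteness of $q^tSq$. Your passage to $M=-S$ and the reduction to the columns $x=M^{-1}e_k$ is a purely cosmetic difference.)

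However, as written, your concluding chain contains two sign errors that happen to cancel. First, $\bilin{Mx}{x^-}=\bilin{e_k}{x^-}=x^-_k\ge 0$, \emph{not} $-x^-_k\le 0$: with your definition $x^-_i=\max(-x_i,0)$ the vector $x^-$ is nonnegative, so its $k$-th coordinate is $\ge 0$. Second, even granting your premise $0\ge\bilin{Mx}{x^-}=\bilin{Mx^+}{x^-}-\bilin{Mx^-}{x^-}$, the valid deduction is $\bilin{Mx^-}{x^-}\ge\bilin{Mx^+}{x^-}$, which is useless (the right-hand side is $\le 0$); your claimed conclusion $\bilin{Mx^-}{x^-}\le\bilin{Mx^+}{x^-}$ reverses the inequality. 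The correct chain is: $0\le x^-_k=\bilin{Mx}{x^-}=\bilin{Mx^+}{x^-}-\bilin{Mx^-}{x^-}$, hence $\bilin{Mx^-}{x^-}\le\bilin{Mx^+}{x^-}\le 0$, and positive definiteness then forces $x^-=0$. With this one-line correction your argument is complete and coincides with the paper's proof.
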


\begin{proof}
   It is enough to show that if $a,b\in\R^r$ are vectors with
   $a=S\inverse b$ such that
   $b_i\le0$ for all $i$, then $a_i\ge 0$ for all $i$.
   To prove this claim, write $a=p-q$, where $p_i=\max(a_i,0)$.
   Then $p_i\ge 0$, $q_i\ge 0$, and $p_iq_i=0$ for all $i$.
   Using these relations along with
   the hypothesis that $s_{i,j}\ge 0$ for $i\ne j$,
   we get
   $$
      p^tSq=\sum_{i,j}p_is_{i,j}q_j=\sum_{i\ne j}p_is_{i,j}q_j\ge 0
   $$
   and hence
   $$
      p^tSq-q^tSq>0 \qquad\mbox{if }q\ne 0\,,
   $$
   because $S$ is negative definite.
   On the other hand, we have
   $$
      p^tSq-q^tSq=(p-q)^tSq=a^tSq=(Sa)^tq=b^tq\le 0 \,,
   $$
   so we conclude that $q=0$.
   This implies that $a_i\ge 0$ for all $i$, as
   claimed.
\end{proof}

\frenchspacing

\bigskip
\small
   Tho\-mas Bau\-er,
   Fach\-be\-reich Ma\-the\-ma\-tik und In\-for\-ma\-tik,
   Philipps-Uni\-ver\-si\-t\"at Mar\-burg,
   Hans-Meer\-wein-Stra{\ss}e,
   D-35032~Mar\-burg, Germany.

\nopagebreak
   \textit{E-mail address:} \texttt{tbauer@mathematik.uni-marburg.de}

\bigskip
   Michael Funke,
   Fach\-be\-reich Ma\-the\-ma\-tik und In\-for\-ma\-tik,
   Philipps-Uni\-ver\-si\-t\"at Mar\-burg,
   Hans-Meer\-wein-Stra{\ss}e,
   D-35032~Mar\-burg, Germany.

\nopagebreak
   \textit{E-mail address:} \texttt{funke@mathematik.uni-marburg.de}

\end{document}